\newtheorem{remark}{Remark}
\newtheorem{definition}{Definition}
\newtheorem{theorem}{Theorem}
\newtheorem{lemma}{Lemma}
\newtheorem{example}{Example}
\newenvironment{proof}[1][Proof:]{\begin{trivlist}
\item[\hskip \labelsep {\bfseries #1}]}{\end{trivlist}}
\newcommand{\id}{{\mathbf 1}}
\newcommand{\supp}{\operatorname{supp}}
\newcommand{\Aut}{\mathrm{Aut}}
\begin{document}
\title{Homogeneous operators and homogeneous integral operators}

\author{Zhirayr Avetisyan \and Alexey Karapetyants}

\maketitle

\begin{abstract}
We introduce and study in a general setting the concept of homogeneity of an operator and, in particular, the notion of homogeneity of an integral operator. In the latter case, homogeneous kernels of such operators are also studied. The concept of homogeneity is associated with transformations of a measure - measure dilations, which are most natural in the context of  our general research scheme. For the study of integral operators, the notions of weak and strong homogeneity of the kernel are introduced. The weak case is proved to generate a homogeneous operator in the sense of our definition, while the stronger condition corresponds to the most relevant specific examples - classes of homogeneous integral operators on various metric spaces, and allows us to obtain an explicit general form for the kernels of such operators. The examples given in the article - various specific cases - illustrate general statements and results given in the paper and at the same time are of interest in their own way.
\end{abstract}

\noindent {\bf Keywords:}\,\, {Homogeneous operators, homogeneous integral operators, operators with homogeneous kernels, measure dilations}

\noindent {\bf AMS MSC 2020:}\,\, 47B90 Operator theory and harmonic analysis ;

\section{Introduction}
In this paper we investigate in a general setting the concept of homogeneity of an operator and, in particular, the concept of homogeneity of an integral operator, and in the latter case, homogeneous kernels of such operators are also studied.

This study is inspired by the well-known theory of a special class of integral operators, namely, the class of operators with
homogeneous kernels in $\mathbb{R}^n$ with degree of homogeneity $-n$. For operators with homogeneous kernels we refer to the books
\cite{KarapetiantsSamko-book-rus, KarapetiantsSamko-book-eng} and the review paper \cite{KarapetiantsSamko-FCAA-1998}. We recall that the one-dimensional theory of operators with homogeneous kernels goes back to the Hardy-Littlewood-P\'olya theory; see \cite{HLP-Book}.

The Fredholm theory for operators with homogeneous kernels is well known; integral equations with such operators have been extensively studied in various settings. First of all, this concerns the framework of Lebesgue spaces, but now there are also results in the framework of Morrey and others function spaces, see e.g. \cite{Umarkhadzhiev, LukkassenPerssonSamko, NSamko-2020, AvsyankinEurasianMath, AvsyankinMathNotes}.

The definition of homogeneity for such integral operators, as customary, is related to the homogeneity property of their kernels. Namely, an operator whose kernel is homogeneous with respect to uniform scalings and invariant with respect to rotations is called an operator with homogeneous kernel.

However, the concept of homogeneity for an operator, not necessarily integral, is certainly a more general and broader issue. Even for an integral operator, the integral kernel does not directly interact with the metric but rather with the measure. The class of transformations which transform the measure in a way similar to what conformal transformations do is far larger, and it is therefore natural to study integral operators that are homogeneous with respect to such general, measure dilations.

At the same time our expectations are that techniques analogous to the classical ones developed in the preceding works on integral operators with homogeneous kernels will become available due to the appropriate behaviour of the measure. These expectations are supported by specific examples, which we also discuss in detail in Section \ref{Homogeneous integral kernels and homogeneous integral operators over some domains}.

So, basing the concept of homogeneous operators on measure dilations, given a measure $\mu$ on a measurable space $M$ we consider the measure dilation induced by a transformation $\varphi:M\to M$ such that $\mu(\varphi(A))=\lambda_\varphi\mu(A)$ for all measurable subsets $A\subset M$, where $\lambda_\varphi>0$ is a constant that depends only on $\varphi$.

Basically, what we do is the next step from the case in which only measure preserving transformations are considered. We allow a scalar factor $\lambda_\varphi$ in the definition given above, and hence operators invariant with respect to a group of such transformations are an extension of usual convolution-type operators, which are known to be originated by invariant measures.

When it comes to a particular, but important case - integral operators, a natural question arises. How does the concept of homogeneity of an operator correlate with the properties of homogeneity of the kernel? To clarify this question, we need to introduce the definitions of weak and strong homogeneity of a function (kernel). The kernel with weak homogeneity generates an integral operator which is homogeneous in the sense of our definition. At the same time, the most profound results are obtained for the case of operators with strong kernel homogeneity. In fact, after certain general considerations, we pass to integral operators with strongly homogeneous kernels, and it is for such kernels that we obtain results formulated in the most constructive form. 

We note that recently the study of special classes of operators, more precisely, integral operators with one or another property of invariance or symmetry, has acquired significant attention. In this regard, we would like to mention the studies of the class of so-called Hausdorff operators, see \cite{LL,LM1} and \cite{KarapetyantsLiflyand}. This class of operators differs essentially from operators with homogeneous kernels in a multidimensional situation, but the two classes are the same in the one-dimensional case. Also, we mention the paper on the class of Hausdorff - Berezin operators \cite{KS-CAOT-2019} and the paper on the class of Hadamard-Bergman operators \cite{KS-HB-2020, KA-HB-2021}. Operators in the latter class are actually integral realizations of multiplier operators considered on a space of holomorphic functions, and in fact, it is a specific example of a class of homogeneous integral operators in our interpretation. We discuss these operators below among other examples.

The paper is organized as follows. We prefer to go from the general to the particular, formulating the main ideas and definitions at the most reasonably general level, and further clarifying these concepts in cases where it is possible to be more specific.

Section \ref{Backgrounds, motivation and general thoughts} is an introductory part, which briefly provides information about classical operators with homogeneous kernels - the main motivation for this work. There we discuss also the concept of measure dilations in a general context. Finally, the notions of weak and strong homogeneity of the kernel of an integral operator are given.

Section \ref{Dilations, homogeneous operators and homogeneous integral operators} contains our main general statements on homogeneous operators and homogeneous integral operators. It is also the key point of study where the geometry of dilations is involved in our research. We separate and analyze two cases, which in the text below are designated as case A and case B. These cases differ in whether or not it is possible to reduce the situation under consideration to an invariant measure by means of certain unitary transformations. In other words, whether it is possible to reduce the operator in question to a convolution-type operator on a group. These two different situations are further explained with specific examples. Towards the end of the section we consider strongly homogeneous kernels. From this moment on, we drop the word "strongly" and continue to call them homogeneous kernels, since our next steps are associated with only such kernels. We obtain a general representation for such kernels, which we use later on in concrete examples.

Section \ref{Homogeneous integral kernels and homogeneous integral operators over some domains} contains our main particular results that are of independent interest and also serve as illustrations for theoretical aspects presented in Sections \ref{Backgrounds, motivation and general thoughts} and \ref{Dilations, homogeneous operators and homogeneous integral operators}. In this section, we consider concrete situations in which it is possible to calculate explicitly the corresponding transformations of the measure and to present with maximum detail the general form of the homogeneous kernel of an integral operator.

We begin our consideration with the case of the cylinder $\mathbb {R} \times \mathbb {T} $, and then move on to the study of homogeneous integral kernels on $ \mathbb{R}^2 $. Here we note the formula (\ref{GeneralSolutionEuclid}) which gives a precise general form of the homogeneous integral kernel. This clarifies the hypothesis that was previously accepted on the general form of an integral kernel, see Remark \ref{Rem:HomogeneousOperatorsProblemEuclid}. As a special example of a class of integral operators with homogeneous kernels on $ \mathbb{R}^ 2 $, or more precisely, on the unit disc, we then proceed with the representation of Hadamard-Bergman convolution operators in the form of integral operators with homogeneous kernels.

Next, we move on to weight measures on the unit disc and consider two different situations; the Haar measure corresponding to the the Poincar\'e disk model $\mathbb{D}\subset\mathbb{C}$, and the classical radial power weight of Bergman type arising in a number of problems in the theory of functions of a complex variable. Then we turn to the case of the Lobachevsky space $ \mathbb{H}^2 $. All these cases are embedded in one general scheme, but in each case the specific calculations differ and quite constructively illustrate the underlying geometry.

Finally, we come to a very interesting and illustrative example within a discussion on integral kernels on $\mathbb{R}^n$ that are homogeneous with respect to all invertible linear transformations. It appears that there are no such kernels for $n>2$ and instead there is just one such kernel up to a constant multiplier for $n=2.$ Further elaboration of the case $n=2$ provides a very specific integral operator (\ref{eq:UniqueIntOpn=2}), whose properties we hope to study in detail in a separate paper. There we also formulate an open question related to the multidimensional case $n>2$.

\section{Backgrounds, motivation and general thoughts}\label{Backgrounds, motivation and general thoughts}
\subsection{Classical integral operators with homogeneous kernel in Lebesgue spaces}

The one dimensional operators with homogeneous kernels
$$
Kf(x)=\int_0^\infty k(x,y)f(y)dy,
$$
where $k(x,y)$ is such that
$$
k(\lambda x,\lambda y)= \lambda^{-1}k(x,y), \ x,y\in \mathbb{R}_+^1,\ \lambda>0,
$$
appeared as counterparts to convolution operators in the following sense: these operators are invariant with respect to dilatations, not translations, in contrast to convolutions. At the same time, these operators are reduced to convolutions via the following isometries between $L^p(\mathbb{R}^1_+)$ and $L^p(\mathbb{R}^1),$ $1\leqslant p\leqslant \infty,$
\begin{eqnarray}
W_pf(t)&=&e^{-\frac{t}{p}}f(e^{-t}),\ \ t\in \mathbb{R},\\
W^{-1}_p g(\eta)&=& \eta^{-\frac{1}{p}}g(-\ln \eta),\ \ \eta\in\mathbb{R}^1_+.
\end{eqnarray}
Denote
$$
\kappa = \int_{\mathbb{R}^1_+}k(1,y)y^{-\frac{1}{p}}dy= \int_{\mathbb{R}^1_+}k(x,1)x^{-\frac{1}{q}}dx,\ \ \ \frac{1}{p}+\frac{1}{q}=1.
$$
The Hardy-Littlewood theorem states that
\begin{theorem}\label{th:HL}(See e.g.\cite{KarapetiantsSamko-book-eng}). Let $\kappa<\infty.$ The operator $K$ is bounded in $L^p(\mathbb{R}^1_+)$ with $\|K\|\leqslant \kappa,$ and $\|K\|=\kappa$ for non negative kernel $k(x,y).$
\end{theorem}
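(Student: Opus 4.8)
The statement is the classical Hardy–Littlewood theorem: with $\kappa<\infty$ we want $\|K\|_{L^p(\mathbb R^1_+)\to L^p(\mathbb R^1_+)}\le\kappa$, with equality for nonnegative kernels. The plan is to exploit the homogeneity of $k$ directly rather than passing through the isometries $W_p$ — although the isometry route to a convolution on $\mathbb R$ is an equally valid alternative and I will comment on it below. First I would change variables in the inner integral by writing $y=xt$, so that
\begin{equation*}
Kf(x)=\int_0^\infty k(x,xt)f(xt)\,x\,dt=\int_0^\infty k(1,t)f(xt)\,dt,
\end{equation*}
using $k(x,xt)=x^{-1}k(1,t)$. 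Thus $K$ is an average of the dilation operators $f(\cdot)\mapsto f(t\,\cdot)$ against the weight $k(1,t)\,dt$.

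**The main estimate via Minkowski's integral inequality.** The key step is to apply Minkowski's inequality for integrals to the representation above:
\begin{equation*}
\|Kf\|_{L^p}=\Big\|\int_0^\infty k(1,t)f(t\,\cdot)\,dt\Big\|_{L^p}\le\int_0^\infty |k(1,t)|\,\|f(t\,\cdot)\|_{L^p}\,dt.
\end{equation*}
Now the dilation $f\mapsto f(t\,\cdot)$ scales the $L^p(\mathbb R^1_+)$ norm by $t^{-1/p}$, i.e. $\|f(t\,\cdot)\|_{L^p}=t^{-1/p}\|f\|_{L^p}$. Substituting this gives $\|Kf\|_{L^p}\le\big(\int_0^\infty |k(1,t)|\,t^{-1/p}\,dt\big)\|f\|_{L^p}=\kappa\|f\|_{L^p}$, which is exactly the bound $\|K\|\le\kappa$. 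I should also check that the two integral expressions defining $\kappa$ agree; this follows from the substitution $t\mapsto 1/t$ together with the homogeneity relation $k(1,1/t)=t\,k(t,1)$ and the identity $1/p+1/q=1$, and it is worth recording because the second form is the one naturally adapted to the adjoint.

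**Sharpness for nonnegative kernels.** For the lower bound $\|K\|\ge\kappa$ when $k\ge0$, the standard device is to test $K$ against functions that are "almost" the critical power $f(y)=y^{-1/p}$ (which is homogeneous of the right degree to be an eigenfunction: formally $Ky^{-1/p}=\kappa\,x^{-1/p}$) but fails to lie in $L^p$. So I would take the truncated family $f_\varepsilon(y)=y^{-1/p+\varepsilon}\chi_{(1,\infty)}(y)$ (or $y^{-1/p-\varepsilon}\chi_{(0,1)}$), compute $\|f_\varepsilon\|_{L^p}\sim(p\varepsilon)^{-1/p}$, and show that $\|Kf_\varepsilon\|_{L^p}/\|f_\varepsilon\|_{L^p}\to\kappa$ as $\varepsilon\to0^+$ by dominated/monotone convergence in the inner integral, using nonnegativity to keep everything under control near the truncation boundary. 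The main obstacle is precisely this last point: controlling the error terms coming from the truncation and justifying the passage to the limit uniformly, which is where the nonnegativity hypothesis is essential (it prevents cancellation that could otherwise spoil the boundary estimate). An alternative, perhaps cleaner, route for both halves is to conjugate by $W_p$: by the stated isometry property $K$ is unitarily equivalent to a convolution operator $g\mapsto h\ast g$ on $L^p(\mathbb R)$ with $h(s)=e^{-s/p}k(1,e^{-s})$, whence $\|K\|\le\|h\|_{L^1}=\kappa$ by Young's inequality, and the sharpness for $h\ge0$ reduces to the well-known fact that the $L^p\to L^p$ norm of convolution with a nonnegative $L^1$ function equals its $L^1$ norm.
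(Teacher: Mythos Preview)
The paper does not give a proof of this theorem; it is quoted as a classical result with a reference to \cite{KarapetiantsSamko-book-eng}. Your argument is the standard one and is correct: the substitution $y=xt$ together with Minkowski's integral inequality yields $\|K\|\le\kappa$, and the approximate-eigenfunction device (or, equivalently, the $W_p$-conjugation to a convolution on $\mathbb{R}$ combined with the sharpness of Young's inequality for nonnegative $L^1$ kernels) gives $\|K\|=\kappa$ when $k\ge0$. The paper's surrounding discussion does introduce the isometries $W_p$ and the reduction to convolution, so your alternative route is exactly in the spirit of the exposition there.

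Two minor remarks. First, the paper defines $\kappa$ without absolute values on $k$; your Minkowski estimate actually bounds $\|K\|$ by $\int_0^\infty|k(1,t)|\,t^{-1/p}\,dt$, which coincides with $\kappa$ only when $k\ge0$ (precisely the regime in which the sharp equality is asserted), so this is a wrinkle in the statement rather than in your proof. Second, watch the sign of the perturbation in your test functions: $y^{-1/p+\varepsilon}\chi_{(1,\infty)}$ is \emph{not} in $L^p$ for $\varepsilon>0$; the correct choices are $y^{-1/p-\varepsilon}\chi_{(1,\infty)}$ or $y^{-1/p+\varepsilon}\chi_{(0,1)}$.
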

The  multidimensional operators
\begin{equation}
Kf(x)=\int_{\mathbb{R}^n} k(x,y)f(y)dy
\end{equation}
are such that
\begin{enumerate}
\item $k(\lambda x,\lambda y)=\lambda^{-n}k(x,y),\ \ \lambda>0,\ \ x,y\in\mathbb{R}^n;$
\item $k(\omega(x),\omega(y))=k(x,y)$ for any $\omega\in SO(n).$
\end{enumerate}
Some examples are
$$
k(x,y)=\frac{1}{|x|^\alpha |x-y|^{n-\alpha}},\ \ \ 0<\alpha<n;\,\,\,\,\ k(x,y)=\frac{1}{|x|^n+|y|^n}a\left(\frac{x\cdot y}{|x||y|}\right).
$$
\begin{remark}
In the literature it was customary to say that such kernels $k(x,y)$ depend only on $|x|, |y|$ and scalar product $x\cdot y.$ As we show below in the two dimensional case, this is an incomplete statement. The statement is true if assuming additionally invariance with respect to reflections. But the general form of such a kernel (without reflections) is in fact presented in (\ref{GeneralSolutionEuclid}). See also Remark \ref{Rem:HomogeneousOperatorsProblemEuclid}. In higher dimensions $n>2$ the discrepancy between the truly general form and merely functions expressible in scalar products will become even wider, for there are more degrees of freedom neglected in the latter case (functions of a point on the sphere versus functions of the projection of a point on a fixed axis).
\end{remark}

We end this section with multidimensional analog of Theorem \ref{th:HL}. Denoting
$$
\kappa =\int_{\mathbb{R}^n}k(e_1,y)y^{-\frac{n}{p}}dy,\ \ \ e_1=(1,0,\dots,0),
$$
we have
\begin{theorem}(See e.g.\cite{KarapetiantsSamko-book-eng}). Let $1\leqslant p\leqslant\infty.$ Let $\kappa<\infty.$ The operator $K$ is bounded in $L^p(\mathbb{R}^n)$ with $\|K\|\leqslant \kappa,$ and $\|K\|=\kappa$ for non negative kernel $k(x,y).$
\end{theorem}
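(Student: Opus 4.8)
The plan is to obtain the norm bound and its sharpness together, by conjugating $K$ to a convolution operator, in the spirit of the one-dimensional reduction recalled above. Passing to polar coordinates $x=r\omega$, $y=s\theta$ ($r,s>0$, $\omega,\theta\in\mathbb{S}^{n-1}$) and using $k(\lambda x,\lambda y)=\lambda^{-n}k(x,y)$, one checks that the isometry $W_pf(t,\omega)=e^{-nt/p}f(e^{-t}\omega)$ of $L^p(\mathbb{R}^n)$ onto $L^p(\mathbb{R}\times\mathbb{S}^{n-1},dt\,d\sigma)$ carries $K$ to
\[
(W_pKW_p^{-1}g)(t,\omega)=\int_{\mathbb{R}}\int_{\mathbb{S}^{n-1}}e^{n\tau/q}\,k(\omega,e^{\tau}\theta)\,g(t-\tau,\theta)\,d\sigma(\theta)\,d\tau,\qquad\tfrac1p+\tfrac1q=1,
\]
i.e.\ a convolution in $t\in\mathbb{R}$ with operator-valued kernel $\tau\mapsto T_\tau$, where $T_\tau$ is the integral operator on $L^p(\mathbb{S}^{n-1})$ with kernel $e^{n\tau/q}k(\omega,e^{\tau}\theta)$. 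Since $W_p$ is an isometric isomorphism, $\|K\|_{L^p\to L^p}$ is exactly the $L^p(\mathbb{R};L^p(\mathbb{S}^{n-1}))$-operator norm of this $t$-convolution.

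For the upper bound I would use the vector-valued Minkowski (Young) inequality, $\|K\|\le\int_{\mathbb{R}}\|T_\tau\|_{L^p(\mathbb{S}^{n-1})\to L^p(\mathbb{S}^{n-1})}\,d\tau$ (if $k$ changes sign, replace $k$ by $|k|$ and read $\kappa$ accordingly). The $SO(n)$-invariance of $k$ forces $\int_{\mathbb{S}^{n-1}}k(\omega,e^{\tau}\theta)\,d\sigma(\theta)$ to be independent of $\omega$, $\int_{\mathbb{S}^{n-1}}k(\omega,e^{\tau}\theta)\,d\sigma(\omega)$ to be independent of $\theta$, and --- upon integrating these constants over $\mathbb{S}^{n-1}$ --- the two to coincide; Schur's lemma on $\mathbb{S}^{n-1}$ then gives $\|T_\tau\|\le e^{n\tau/q}\int_{\mathbb{S}^{n-1}}k(e_1,e^{\tau}\theta)\,d\sigma(\theta)$ for every $p\in[1,\infty]$. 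Integrating in $\tau$ and substituting $\rho=e^{\tau}$ turns the right-hand side into $\int_0^\infty\int_{\mathbb{S}^{n-1}}k(e_1,\rho\theta)\,\rho^{\,n/q-1}\,d\sigma(\theta)\,d\rho$, which equals $\int_{\mathbb{R}^n}k(e_1,y)|y|^{-n/p}\,dy=\kappa$ because $n/q-1=(n-1)-n/p$; hence $\|K\|\le\kappa$. (A self-contained alternative is a Schur test directly on $\mathbb{R}^n$ with power weight $|y|^{n/(pq)}$: the two Schur conditions reduce, via the substitutions $y=|x|z$ and $x=|y|z$ together with homogeneity and rotation invariance, to $\int_{\mathbb{R}^n}k(x,y)|y|^{-n/p}dy=\kappa|x|^{-n/p}$ and its transpose $\int_{\mathbb{R}^n}|x|^{-n/q}k(x,y)dx=\kappa|y|^{-n/q}$, the constancy $\int_{\mathbb{R}^n} k(e_1,z)|z|^{-n/p}dz=\int_{\mathbb{R}^n} k(z,e_1)|z|^{-n/q}dz=\kappa$ being the $n$-dimensional version of the one-dimensional identity for $\kappa$ stated above.)

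For sharpness assume $k\ge0$. The decisive observation is that the constant $\mathbf 1_{\mathbb{S}^{n-1}}$ is an eigenfunction of the spherical averaging: $(T_\tau\mathbf 1)(\omega)=c(\tau):=e^{n\tau/q}\int_{\mathbb{S}^{n-1}}k(e_1,e^{\tau}\theta)\,d\sigma(\theta)$ is constant in $\omega$, and $\int_{\mathbb{R}}c(\tau)\,d\tau=\kappa$. Testing the $t$-convolution on $g_N(t,\omega)=\mathbf 1_{[-N,N]}(t)$ gives $(W_pKW_p^{-1}g_N)(t,\omega)=\int_{t-N}^{t+N}c(\tau)\,d\tau$, which for $|t|<N-M$ is at least $\int_{-M}^{M}c(\tau)\,d\tau=\kappa-o_M(1)$, while $\|g_N\|_p^p=2N\,|\mathbb{S}^{n-1}|$; letting $N\to\infty$ and then $M\to\infty$ yields $\|W_pKW_p^{-1}g_N\|_p/\|g_N\|_p\to\kappa$, so $\|K\|\ge\kappa$. (Equivalently, $\|K\|$ is pinched between $\int_{\mathbb{R}}\|T_\tau\|\,d\tau=\kappa$ and the $L^p(\mathbb{S}^{n-1})$-operator norm of $\int_{\mathbb{R}}T_\tau\,d\tau$, whose kernel has all of its $\omega$- and $\theta$-integrals equal to $\kappa$ and which therefore has norm exactly $\kappa$.) At the endpoints equality is attained outright: $f\equiv1$ for $p=\infty$, and for $p=1$ every nonnegative $f$ satisfies $\|Kf\|_1=\int f(y)\big(\int k(x,y)\,dx\big)\,dy=\kappa\|f\|_1$ by Tonelli.

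The upper bound is essentially bookkeeping with the changes of variables, the identity $n/q-1=(n-1)-n/p$ being exactly what makes the spherical Schur constant integrate to $\kappa$. I expect the genuine work to be the sharpness half for $1<p<\infty$: one must verify that the truncation realizing the formal eigenfunction --- here $\mathbf 1_{[-N,N]}\otimes\mathbf 1_{\mathbb{S}^{n-1}}$ after conjugation, i.e.\ a truncated $|x|^{-n/p}$ on the original side --- produces boundary contributions that are negligible compared with $\|g_N\|_p\sim N^{1/p}$. This is precisely where the hypothesis $\kappa<\infty$ is used quantitatively: it is the integrability of $c(\tau)$ over $\mathbb{R}$ that allows its tails (equivalently, the contributions of $\rho\to0$ and $\rho\to\infty$ in $\rho^{(n-1)-n/p}\int_{\mathbb{S}^{n-1}}k(e_1,\rho\theta)\,d\sigma(\theta)$) to be discarded.
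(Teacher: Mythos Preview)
The paper does not prove this theorem: it is quoted as a known result with the reference \cite{KarapetiantsSamko-book-eng}, so there is no in-paper argument to compare your proposal against.

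That said, your argument is correct and is precisely the standard route indicated by the surrounding text (reduction to a convolution via the exponential substitution, the $n$-dimensional analogue of the maps $W_p$). The computation conjugating $K$ to the $t$-convolution with operator-valued kernel $T_\tau$ is accurate; the step $\|T_\tau\|_{L^p(\mathbb{S}^{n-1})\to L^p(\mathbb{S}^{n-1})}\le e^{n\tau/q}\int_{\mathbb{S}^{n-1}}k(e_1,e^\tau\theta)\,d\sigma(\theta)$ follows from Schur's test with constant weight once you have observed that the row and column integrals on the sphere coincide by the $SO(n)$-invariance; and the identity $n/q-1=(n-1)-n/p$ is exactly what turns the $\tau$-integral into $\kappa$. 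The alternative Schur test on $\mathbb{R}^n$ with weight $|y|^{-n/p}$ that you sketch is in fact the proof given in the cited monograph, so your two variants cover both the ``reduce to convolution'' and the ``direct Schur'' presentations found in the literature.

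Your sharpness argument for $1<p<\infty$ is also the standard one: the formal extremal $|x|^{-n/p}$ becomes, after conjugation, the constant function on $\mathbb{R}\times\mathbb{S}^{n-1}$, and truncating it to $[-N,N]$ gives the ratio $\kappa_M\,((N-M)/N)^{1/p}$ you describe. One small remark: as written, the theorem states $\|K\|\le\kappa$ with $\kappa=\int k(e_1,y)|y|^{-n/p}dy$ for a not-necessarily-nonnegative kernel, which only makes sense as a norm bound if one reads $\kappa$ with $|k|$ in place of $k$; you flag this correctly. The endpoint cases $p=1$ and $p=\infty$ are handled cleanly by the Tonelli/constant-function arguments you give, using the dual identity $\int_{\mathbb{R}^n}k(e_1,y)|y|^{-n/p}dy=\int_{\mathbb{R}^n}k(x,e_1)|x|^{-n/q}dx$, which is the $n$-dimensional version of the one-dimensional identity the paper records just before Theorem~\ref{th:HL}.
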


The above mentioned results have weighted analogues. Such operators also have connections to Wiener-Hopf operators, and algebras of such operators together with equations involving such operators were studied thoroughly, see \cite{KarapetiantsSamko-book-eng} and references therein.

\subsection{Measure dilations}\label{SubSec:Measure dilations}

The choice of linear isotropic dilations (scalings) and rotations in the above standard definitions is natural from a metric space point of view: these are conformal transformations of the Riemannian manifold $\mathbb{R}^n$. The Lebesgue measure arises as the metric-induced measure and behaves appropriately with respect to these transformations, which allows one to make use of variable substitution techniques in the study of homogeneous integral operators, see Section \ref{Sec:Homogeneous integral operators} for precise definition of a homogeneous integral operator.

However, from a conceptual point of view an integral operator does not directly interact with the metric but rather with the measure. The class of transformations which transform the measure in a way similar to what conformal transformations do is far larger, and it is therefore both natural and interesting to study integral operators that are homogeneous with respect to such general, measure dilations, in the expectation that techniques analogous to the classical ones will become available due to the appropriate behaviour of the measure.

More precisely, the definition of a measure dilation (or simply dilation for the rest of the paper) is as follows.

\begin{definition}
Given a measure $\mu$ on a measurable space $M$, a measure dilation is a transformation $\varphi:M\to M$ in the appropriate category (measurable space automorphism for a measurable space, homeomorphism for a topological space, diffeomorphism for a manifold etc.), such that
$$
\mu(\varphi(A))=\lambda_\varphi\mu(A)\,\,\,\,\,\,\, (\mathrm{or\,\, simply}\,\,\, \mu\circ\varphi=\lambda_\varphi\mu)
$$
for all measurable sets $A\subset M$, where $\lambda_\varphi>0$ is a constant that depends only on $\varphi$.
\end{definition}

Note that in the archetypical setting of $M=\mathbb{R}^n$ in the definition above it would be customary to write $\lambda_\varphi^n$ in place of $\lambda_\varphi$, but since the dimension will not always be well-defined in our general setting, we do not follow that tradition.

It is easy to see that the set of all such $\varphi$ comprises a group, which we denote by $\mathrm{Dil}(M,\mu)$. The map $\varphi\mapsto\lambda_\varphi$ gives a group homomorphism $\lambda:\mathrm{Dil}(M,\mu)\to\mathbb{R}_+$ (character).

In the majority of cases the group $\mathrm{Dil}(M,\mu)$ will be too large to work with, and we will consider a subgroup $G\subset\mathrm{Dil}(M,\mu)$ instead. The character $\lambda$ will have a kernel $\ker\lambda\subset G$, a normal subgroup, and transformations $\varphi\in\ker\lambda$ will preserve the measure $\mu$, similar to rotations in the classical situation. However, a canonical factorization of $G$ (or $\mathrm{Dil}(M,\mu)$) into a product of $\ker\lambda$ and another subgroup $B$ will not always exist in full generality, and we will not be able to unambiguously speak of a subgroup $B\subset G$ of actual dilations, i.e., those performing the non-trivial dilations $\lambda_\varphi\neq1$. Therefore, we will simply call all $\varphi\in\mathrm{Dil}(M,\mu)$ dilations.

In case $G$ is a Lie group and $\lambda$ a is Lie group character, one can show that the semidirect factorization $G=\ker\lambda\rtimes\mathbb{R}$ does hold, and this may be used in certain applications.

In order to enhance the group theoretical language to be used in the paper, instead of writing $M\ni x\mapsto\varphi(x)\in M$ we will write $M\ni x\mapsto gx\ni M$ for $g\in G$, i.e., a left action of the group $G$ on the space $M$. We will assume that this action is transitive, i.e., for every $x,y\in M$ there exists $g\in G$ such that $gx=y$; otherwise the implications to be derived in this paper would hold in each $G$-orbit separately, independently of each other. Here we will not address the question whether or not a transitively acting $G$ exists. For instance, in the category of topological spaces, if $M$ is a rigid space then $\mathrm{Dil}(M,\mu)=\emptyset$ a priori. The basic definitions and facts about group actions on abstract sets, topological spaces and smooth manifolds can be found in monographs \cite{Lang02}, \cite{Folland15} and \cite{RudolphSchmidt13}, respectively.

Note that the particular case when $G\subset\ker\lambda$ corresponds to the $G$-invariant measure $\mu$ on $M$. Invariant measures are widely studied in non-commutative harmonic analysis and representation theory. Measure preserving transformations  are also relevant in the subject of incompressible flows.

Relaxing the strict measure preservation to allow a scalar factor, as in the definition given above, is thus the simplest natural step beyond the scope of invariant measures, and operators invariant with respect to a group of such transformations are thus an extension of usual convolution-type operators.

We will see below that in many cases the $L^p$-theory of such operators can be effectively reduced to that of convolution-type operators, see formulae (\ref{eq:Up}) and (\ref{RedConvOp}).

\subsection{Homogeneous integral kernels: strong and weak homogeneity conditions}

One of the main subjects of study in this paper is measurable functions $K$ which serve as integral kernels for integral operators $\operatorname{K}$, and satisfy certain homogeneity conditions with respect to a transitively acting group $G$ of dilations in the measure space $(M,\mu)$ (see Subsection \ref{SubSec:Measure dilations}).

We will operate with two slightly different homogeneity conditions applied to a kernel $K$.

\begin{definition}We say that a function (a kernel) $K$ satisfies weak or strong homogeneity condition provided the following statements hold, correspondingly:
\begin{itemize}
\item \textbf{Weak:}
\begin{equation}
\Bigl(\forall g\in G\Bigr)\Bigl(\mu^{\otimes2}-\mbox{a.e.}\,\,(x,y)\in M\times M\Bigr)\quad K(gx,gy)=\frac1{\lambda_g}K(x,y)\label{WeakHomKer}
\end{equation}
\item \textbf{Strong:}
\begin{equation}
\Bigl(\forall(x,y)\in M\times M\Bigr)\Bigl(\forall g\in G\Bigr)\quad K(gx,gy)=\frac1{\lambda_g}K(x,y)\label{StrongHomKer}
\end{equation}
\end{itemize}
Correspondingly, to the fulfilment of the above weak or strong conditions, we will call the corresponding kernel $K$ weakly homogeneous or strongly homogeneous. Later on we will omit the word "strongly" and use only the abbreviation "homogeneous kernel" for those kernels satisfying the strong type condition (\ref{StrongHomKer}), see Remark \ref{remark:From this moment on} for precise statement on this issue.
\end{definition}

It is clear that the strong condition implies the weak one. We will see later in Theorem \ref{HomOptoKerProp} that the invariance of the integral operator $\operatorname{K}$ with respect to the group of dilations $G$ (or homogeneity) is equivalent to the weak homogeneity condition (\ref{WeakHomKer}), but our most explicit results will require the stronger condition (\ref{StrongHomKer}), see Subsection \ref{SubSec: Homogeneous integral kernels}. In order to fill the gap between these two, weak and strong, notions of homogeneity, we may need to assume for instance, that $M$ is a decent topological space and $K$ is continuous $\mu$-a.e. But this issue will not be addressed in the present work.

\section{Dilations, homogeneous operators and homogeneous integral operators}\label{Dilations, homogeneous operators and homogeneous integral operators}
\subsection{Dilations}

Hereinafter $\mathbb{F}\in\{\mathbb{R},\mathbb{C}\}$ (this means either $\mathbb{F}=\mathbb{R}$ or $\mathbb{F}=\mathbb{C}$ ) will be fixed and for a measure space $(M,\mu)$, and  the symbol $L(M,\mu)$ will stand for the $\mathbb{F}$-vector space of measurable functions $f:M\to\mathbb{F}$ (more precisely, equivalence classes up to $\mu$-null subsets, as usual). We will work in a fixed subcategory of measure spaces, and $\Aut(M)$ will stand for the group of appropriate self-morphisms. Denote (recall that)
$$
\mathrm{Dil}(M,\mu)\doteq\{\varphi\in\Aut(M)\,\vline\quad\mu\circ\varphi=\lambda_\varphi\mu,\quad\lambda_\varphi>0\}.
$$
This gives a group character $\lambda:\mathrm{Dil}(M,\mu)\to\mathbb{R}_+$. Then $\ker\lambda\subset\mathrm{Dil}(M,\mu)$ is a normal subgroup. Let $G\subset\mathrm{Dil}(M,\mu)$ be a subgroup that acts transitively on $M$. The transitive action of $G$ by measurable space automorphisms allows us to represent $M$ as the homogeneous space $G/H$ with $G$-invariant measurable structure, where $H\subset G$ is a non-normal subgroup.

Henceforth we will deal with the measure space $(G/H,\mu)$ where $\mu(g.)=\lambda_g\mu(.)$ and $\lambda:G\to\mathbb{R}_+$ is a character. Here we will distinguish between two essentially different situations:

\begin{itemize}

\item \textbf{Case A:} $H\subset\ker\lambda$

\item \textbf{Case B:} $H\not\subset\ker\lambda$

\end{itemize}

In Case A, $\lambda$ gives a well-defined function $\lambda:G/H\to\mathbb{R}_+$. We will assume that this function is measurable with respect to the $G$-invariant measurable structure on $M=G/H$. This can be guaranteed if we equip $\mathrm{Dil}(M,\mu)$ with an appropriate measurable structure and require that $\lambda:\mathrm{Dil}(M,\mu)\to\mathbb{R}_+$ is measurable. This should be true in all cases of interest. Now we can introduce the measure $\tilde\mu$ on $G/H$ by setting
$$
d\tilde\mu(x)=\frac1{\lambda_x}d\mu(x),\quad d\tilde\mu(gx)=\frac1{\lambda_{gx}}d\mu(gx)=d\tilde\mu(x),\quad\forall x\in G/H,
$$
and we see that $\tilde\mu$ is $G$-invariant.

We can introduce for $p>0$ the linear operator $\operatorname{U}_p:L(G/H,\mu)\to L(G/H,\mu)$ by setting
\begin{equation}\label{eq:Up}
\operatorname{U}_pf(x)\doteq\lambda_x^{\frac1p}f(x),\quad\forall x\in G/H,\quad\forall f\in L(G/H,\mu).
\end{equation}
This will give a unitary operator $L^p(G/H,\mu)\to L^p(G/H,\tilde\mu),$ and recall that all the above said is valid in the Case A.

In Case B we will not be able to reduce $\mu$ to an invariant measure, and this will have implications for homogeneous integral operators to be discussed below, see Remark \ref{SingKerRemark}.

Let us note that at this level of generality, any abstract groups $G$, $H\subset G$ and any character $\lambda:G\to\mathbb{R}_+$ can figure in this construction. Indeed, take a left-invariant measure $\mu_L$ on $G$ (for instance, the left Haar measure on $G$ taken with discrete topology) and define the measure $\tilde\mu$ on $G$ by $\frac{d\mu_L(g)}{d\tilde\mu(g)}=\lambda_g$ for all $g\in G$. Then define the measure $\mu$ on $G/H$ by $\mu=\tilde\mu\circ\operatorname{q}_H^{-1}$, where $\operatorname{q}_H:G\to G/H$ is the standard quotient map which is measurable and $G$-equivariant. It is easy to check that $G\subset\mathrm{Dil}(G/H,\mu)$.

\subsection{The geometry of dilations}

Define the set
$$
\mathcal{X}^\lambda_H\doteq\left\{aH\,\vline\quad aHa^{-1}\cap H\not\subset\ker\lambda\right\}\subset G/H.
$$
It can be described also as
$$
\mathcal{X}^\lambda_H=\left\{x\in G/H\,\vline\quad(\exists h\in H)\,hx=x\quad\wedge\quad\lambda_h\neq1\right\}.
$$
If we denote temporarily by $\phi:H\times G/H\to\mathbb{R}_+\times G/H\times G/H$ the measurable map $(h,x)\mapsto(\lambda_h,hx,x)$, then we can see that $\mathcal{X}^\lambda_H$ is the projection onto the second component of the preimage $\phi^{-1}(Y)$ of the measurable set
$$
Y=(\mathbb{R}_+\setminus\{1\})\times\mathrm{diag}(G/H\times G/H)\subset\mathbb{R}_+\times G/H\times G/H.
$$
This shows that $\mathcal{X}^\lambda_H$ is a measurable set.

It is clear that in Case A above $\mathcal{X}^\lambda_H=\emptyset$. In Case B we have $\mathcal{X}^\lambda_H\neq\emptyset$, because $\id H\in\mathcal{X}^\lambda_H$ holds automatically. In fact, it is easy to check that $\mathcal{X}^\lambda_H$ is invariant under the action of $\mathrm{N}(H)$ and the transformation $aH\mapsto a^{-1}H$,
$$
\mathcal{X}^\lambda_H=\mathrm{N}(H)\mathcal{X}^\lambda_H,\quad\bigl(\forall aH\in G/H\bigr)a^{-1}H\in\mathcal{X}^\lambda_H,
$$
where $\mathrm{N}(H)$ is the normalizer of $H$ in $G$. In particular, $\mathrm{N}(H)H\subset\mathcal{X}^\lambda_H$ is always true, and the action of $H$ on $\mathrm{N}(H)H$ is trivial. In many practical situations we will have $\mathrm{N}(H)H=\mathcal{X}^\lambda_H$ and/or $\mu(\mathcal{X}^\lambda_H)=0$.

Denote by $M_*\doteq G/H\setminus\mathcal{X}^\lambda_H$ the ``regular'' part of $G/H$, which is again a $\mathrm{N}(H)$-invariant and $aH\mapsto a^{-1}H$-invariant measurable set. Let $\operatorname{p}_H:G/H\to H\backslash G/H$ be the standard quotient map, which induces by pushforward a measurable structure on $H\backslash G/H$. We will write $H\backslash M_*\doteq\operatorname{p}_H(M_*)$ and $H\backslash\mathcal{X}^\lambda_H\doteq\operatorname{p}_H(\mathcal{X}^\lambda_H)$, which are measurable sets such that $H\backslash G/H=H\backslash M_*\cup H\backslash\mathcal{X}^\lambda_H$. Note also that $\operatorname{p}_H|_{\mathrm{N}(H)H}$ is injective.

It is tempting to define a measure $\nu\doteq\mu\circ\operatorname{p}_H^{-1}$ on $H\backslash G/H$. However, in Case B we will discover that $\nu(A)\in\{0,+\infty\}$ for every measurable set $A\subset H\backslash G/H$, which renders the measure $\nu$ less than useful. Nonetheless, the notion of a $\nu$-null set in $H\backslash G/H$ thus defined is always sensible.

Consider the relation $\simeq$ on the space $M_*\times\mathbb{F}$ given by
$$
(aH,\alpha)\simeq(bH,\beta)\quad\Leftrightarrow\quad\bigl(\exists h\in H\bigr)\,b=ha\,\wedge\,\alpha=\lambda_h\beta,\quad\forall(aH,\alpha),(bH,\beta)\in M_*\times\mathbb{F}.
$$
It is easily checked that $\simeq$ is an equivalence relation. Denote $\mathcal{T}\doteq M_*/\simeq$ and let $\operatorname{q}_\simeq:M_*\times\mathbb{F}\to\mathcal{T}$ be the quotient map. We will write $[aH,\alpha]_\simeq\doteq\operatorname{q}_\simeq((aH,\alpha))$ for $(aH,\alpha)\in M_*\times\mathbb{F}$. Define the map $\pi:\mathcal{T}\to H\backslash M_*$ by $\pi([aH,\alpha]_\simeq)=HaH$ for all $[aH,\alpha]_\simeq\in\mathcal{T}$, which is easily seen to be surjective. Moreover, if we define an $\mathbb{F}$-module structure on $\mathcal{T}$ by
$$
\beta\cdot[aH,\alpha]_\simeq\doteq[aH,\beta\alpha]_\simeq,\quad\forall\beta\in\mathbb{F},\quad\forall[aH,\alpha]_\simeq\in\mathcal{T},
$$
then $\operatorname{q}_\simeq$ is easily seen to be $\mathbb{F}$-linear, and
$$
\pi(\beta\cdot[aH,\alpha]_\simeq)=\pi([aH,\alpha]_\simeq),\quad\forall\beta\in\mathbb{F},\quad\forall[aH,\alpha]_\simeq\in\mathcal{T}.
$$
Another way to see this would be to define a left $H$-action on the $\mathbb{F}$-line bundle $M_*\times\mathbb{F}$ by
$$
h\cdot(aH,\alpha)\doteq\left(haH,\frac{\alpha}{\lambda_h}\right),\quad\forall h\in H,\quad\forall(aH,\alpha)\in M_*\times\mathbb{F},
$$
and set $\mathcal{T}=H\backslash(M_*\times\mathbb{F})$. Thus, $\mathcal{T}$ is an $\mathbb{F}$-line bundle over $H\backslash M_*$. Moreover, $\mathcal{T}$ has a measurable structure inherited from $M_*\times\mathbb{F}$, and the projection $\pi$ is measurable. If $L(M_*,\mu)$ and $L(\mathcal{T},\nu)$ stand for the $\mathbb{F}$-vector spaces of measurable sections in the line bundles $M_*\times\mathbb{F}$ and $\mathcal{T}$ up to $\mu$-a.e. and $\nu$-a.e. zero sections, respectively, then we can define the following $\mathbb{F}$-linear map $\wp:L(\mathcal{T},\nu)\to L(M_*,\mu)$,
$$
\wp[s](aH)\doteq(aH,\alpha),\quad s(HaH)=[aH,\alpha]_\simeq,\quad\forall aH\in M_*,\quad\forall s\in L(\mathcal{T},\nu).
$$
That this map is well-defined follows by the transversality of the $H$-action on $M_*\times\mathbb{F}$.

Introduce the following class of functions,
$$
\mathcal{F}^\lambda_H\doteq\left\{F\in L(G/H,\mu)\,\vline\quad\Bigl(\forall x\in G/H\Bigr)\Bigl(\forall h\in H\Bigr) F(hx)=\frac1{\lambda_h}F(x)\right\}.
$$
It is clear that if $F\in\mathcal{F}^\lambda_H$ then $F|_{\mathcal{X}^\lambda_H}=0$ and $F|_{M_*}\in\wp(L(\mathcal{T},\nu))$. Thus,
\begin{equation}
\mathcal{F}^\lambda_H=\left\{F\in L(G/H,\mu)\,\vline\quad F=F_*\cup 0,\quad F_*\in\wp(L(\mathcal{T},\nu))\right\}.\label{FlambdaHDesc}
\end{equation}

\subsection{Homogeneous operators}\label{Sec:Homogeneous operators}

Consider the so-called quasiregular representation of $G$ on functions over $G/H$ by pullback,
$$
\operatorname{L}_gf(x)=f(g^{-1}x),\quad\forall x\in G/H,\quad\forall g\in G,\quad\forall f\in\mathbb{F}^M.
$$
Since for every measurable function $f:M\to\mathbb{F}$ we have
$$
\mu((\operatorname{L}_gf)^{-1}(0))=\mu(g\cdot f^{-1}(0))=\lambda_g\mu(f^{-1}(0)),\quad\forall g\in G,
$$
the image of a $\mu$-a.e. vanishing function in $\operatorname{L}_g$ vanishes $\mu$-a.e. Thus, for every $g\in G$ this gives a linear operator $\operatorname{L}_g:L(G/H,\mu)\to L(G/H,\mu)$.

Note that in Case A, for every $g\in G$ the operator $\operatorname{L}_g$ satisfies
$$
\operatorname{U}_p\operatorname{L}_g=\lambda_g^{\frac1p}\operatorname{L}_g\operatorname{U}_p,\quad\forall g\in G,\quad\forall p\in\mathbb{R}_+,
$$
where $\operatorname{U}_p$ is given by (\ref{eq:Up}). Also, the operator $\operatorname{L}_g:L^2(G/H,\tilde\mu)\to L^2(G/H,\tilde\mu)$ is unitary, $g\in G$.

\begin{definition}\label{def;HO}
Let $\mathrm{D}\subset L(G/H,\mu)$ be a vector subspace. Consider a linear operator $\operatorname{K}:\mathrm{D}\to L(G/H)$. We will say that $\operatorname{K}$ is homogeneous if $\mathrm{D}$ is $\operatorname{L}_g$-invariant,
$$
\operatorname{L}_g(\mathrm{D})\subset\mathrm{D},
$$
and $\operatorname{K}$ commutes with $\operatorname{L}_g$ on $\mathrm{D}$ for all $g\in G$,
$$
\operatorname{K}\operatorname{L}_gf=\operatorname{L}_g\operatorname{K}f,\quad\forall f\in\mathrm{D},\quad\forall g\in G.
$$
\end{definition}

In Case A, for $p>0$ denote $\widetilde{\mathrm{D}}_p\doteq\operatorname{U}_p(\mathrm{D})$ and observe that $\widetilde{\mathrm{D}}_p$ is $\operatorname{L}_g$-invariant,
$$
\operatorname{L}_g(\widetilde{\mathrm{D}}_p)=\operatorname{L}_g\operatorname{U}_p(\mathrm{D})=\lambda_g^{-\frac1p}\operatorname{U}_p\operatorname{L}_g(\mathrm{D})\subset\widetilde{\mathrm{D}}_p,\quad\forall g\in G,\quad\forall p\in\mathbb{R}_+.
$$
Define for $p>0$ the unitarily transformed operator $\widetilde{\operatorname{K}}_p=\operatorname{U}_p\operatorname{K}\operatorname{U}_p^{-1}:\widetilde{\mathrm{D}}_p\to L(G/H,\tilde\mu)$, which satisfies
$$
\operatorname{L}_g\widetilde{\operatorname{K}}_pf=\operatorname{L}_g\operatorname{U}_p\operatorname{K}\operatorname{U}_p^{-1}f=\lambda_g^{-\frac1p}\operatorname{U}_p\operatorname{L}_g\operatorname{K}\operatorname{U}_p^{-1}f=\lambda_g^{-\frac1p}\operatorname{U}_p\operatorname{K}\operatorname{L}_g\operatorname{U}_p^{-1}f
$$
\begin{equation}
=\operatorname{U}_p\operatorname{K}\operatorname{U}_p^{-1}\operatorname{L}_gf=\widetilde{\operatorname{K}}_p\operatorname{L}_gf,\quad\forall f\in\widetilde{\mathrm{D}},\quad\forall g\in G,\quad\forall p\in\mathbb{R}_+.\label{RedConvOp}
\end{equation}
Thus, in Case A the operator $\widetilde{\operatorname{K}}_p$ is a general $G$-invariant (convolution-type) operator on $G/H$ studied in non-commutative harmonic analysis.

Note that in Case B such a reduction to $G$-invariant operators is not possible. In some sense, homogeneous operators represent a generalization of convolution-type operators, and a more explicit relation between the two will be seen in what follows.

\subsection{Homogeneous integral operators}\label{Sec:Homogeneous integral operators}

An operator $\operatorname{K}:\mathrm{D}\to L(G/H,\mu)$, with $\mathrm{D}\subset L(G/H,\mu)$ as before, will be called an integral operator if there is a measurable kernel function $K\in L(G/H\otimes G/H,\mu^{\otimes2})$ such that
\begin{equation}
\operatorname{K}f(x)=\int\limits_{G/H}K(x,y)f(y)d\mu(y),\quad\mu-\mbox{a.e.}\,\,x\in G/H,\quad\forall f\in\mathrm{D}.\label{TDef}
\end{equation}
Note that if the integral kernel $K$ is singular, as is the case in Example \ref{GL2HomKerExample} below, then the integral in formula (\ref{TDef}) may not converge in the usual, absolute sense. In that case, in order to make sense of $\mathrm{K}$ as a singular integral operator, one needs to specify a precise conditional convergence scheme.
\begin{definition}Let $\mathrm{D}\subset L(G/H,\mu)$ be a vector subspace.
A homogeneous integral operator $\operatorname{K}:\mathrm{D}\to L(G/H)$ is an integral operator of the form (\ref{TDef}) which is also a homogeneous operator as defined above in Definition \ref{def;HO}.
\end{definition}
We will need a separation condition for the domain $\mathrm{D}$, namely, that there exists a sequence $\{f_k\}_{k=1}^\infty\subset\mathrm{D}$ such that
\begin{equation}
(\forall F\in L(G/H,\mu))\left[(\forall k\in\mathbb{N})\int\limits_{G/H}F(x)f_k(x)d\mu(x)=0\right]\quad\Rightarrow\quad F=0.\label{DSepPoints1}
\end{equation}
It will be shown that this condition is easily satisfied in the most relevant situations.

\begin{theorem}\label{HomOptoKerProp} Assume that $\mathrm{D}$ separates points in $L(G/H,\mu)$ as in (\ref{DSepPoints1}) and $\operatorname{L}_g(\mathrm{D})\subset\mathrm{D}$ for all $g\in G$. Then the integral operator $\operatorname{K}$ with integral kernel $K$ defined by formula (\ref{TDef}) is homogeneous if and only if $K$ satisfies the weak homogeneity condition (\ref{WeakHomKer}), which in the case under consideration takes the form:
\begin{equation}
\Bigl(\forall g\in G\Bigr)\Bigl(\mu^{\otimes2}-\mbox{a.e.}\,\,(x,y)\in G/H\times G/H\Bigr)\quad K(gx,gy)=\frac1{\lambda_g}K(x,y).\label{KEq}
\end{equation}
\end{theorem}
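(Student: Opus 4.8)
The plan is to translate the operator identity $\operatorname{K}\operatorname{L}_g f=\operatorname{L}_g\operatorname{K}f$ into an explicit integral identity by means of the change of variables dictated by the measure-dilation law $\mu\circ g=\lambda_g\mu$, and then to read off (\ref{KEq}) from it using the separation hypothesis (\ref{DSepPoints1}); both implications will come out of the same computation.

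First I would record the change-of-variables formula: $\mu(g(A))=\lambda_g\mu(A)$ gives $\int_{G/H}\psi(g^{-1}y)\,d\mu(y)=\lambda_g\int_{G/H}\psi(z)\,d\mu(z)$ for all measurable $\psi$, and in particular $g$, $g^{-1}$, and the product maps $g\times\mathrm{id}$, $g\times g$ send $\mu$- resp.\ $\mu^{\otimes2}$-null sets to null sets. Combining this with (\ref{TDef}), for $f\in\mathrm{D}$ and $g\in G$ I get, for $\mu$-a.e. $x\in G/H$,
$$\operatorname{K}\operatorname{L}_g f(x)=\int_{G/H}K(x,y)f(g^{-1}y)\,d\mu(y)=\lambda_g\int_{G/H}K(x,gz)f(z)\,d\mu(z),$$
$$\operatorname{L}_g\operatorname{K}f(x)=\operatorname{K}f(g^{-1}x)=\int_{G/H}K(g^{-1}x,z)f(z)\,d\mu(z),$$
both sides being defined $\mu$-a.e. since $\operatorname{L}_g f\in\mathrm{D}$. (If $K$ is singular, as in Example \ref{GL2HomKerExample}, these manipulations are to be read inside the prescribed conditional convergence scheme.)

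For the implication ``(\ref{KEq}) $\Rightarrow$ homogeneous'', fix $g$; substituting $u=g^{-1}x$, $v=z$ in $K(gu,gv)=\lambda_g^{-1}K(u,v)$ and using that $g\times\mathrm{id}$ preserves $\mu^{\otimes2}$-null sets, then applying Fubini, I obtain that for $\mu$-a.e. $x$ one has $K(x,gz)=\lambda_g^{-1}K(g^{-1}x,z)$ for $\mu$-a.e. $z$; plugging this into the first display gives $\operatorname{K}\operatorname{L}_g f(x)=\operatorname{K}f(g^{-1}x)=\operatorname{L}_g\operatorname{K}f(x)$ for $\mu$-a.e. $x$, and since $\operatorname{L}_g(\mathrm{D})\subset\mathrm{D}$ is assumed, $\operatorname{K}$ is homogeneous. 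Conversely, assuming homogeneity, I equate the two displays and replace $x$ by $gx$ (legitimate a.e., $g$ preserving null sets) to get, for every $g\in G$ and $f\in\mathrm{D}$,
$$\int_{G/H}\bigl(\lambda_g K(gx,gz)-K(x,z)\bigr)f(z)\,d\mu(z)=0\qquad\text{for }\mu\text{-a.e. }x.$$
Now the separation condition enters: for each $k$ this identity with $f=f_k$ holds off a null set $N_k$; with $N=\bigcup_k N_k$ (null, as $\{f_k\}$ is \emph{countable}), for every $x\notin N$ the measurable slice $z\mapsto F_x(z):=\lambda_g K(gx,gz)-K(x,z)$ (measurable for a.e. $x$ by Fubini) is annihilated by all $f_k$ against $d\mu$, hence $F_x=0$ $\mu$-a.e. by (\ref{DSepPoints1}). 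Thus $\lambda_g K(gx,gz)=K(x,z)$ for $\mu$-a.e. $z$ whenever $x\notin N$, and the converse direction of Fubini turns this into $K(gx,gz)=\lambda_g^{-1}K(x,z)$ for $\mu^{\otimes2}$-a.e. $(x,z)$, which is (\ref{KEq}) since $g$ was arbitrary.

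The main obstacle, and really the only point requiring care, is the null-set bookkeeping in the ``only if'' direction: the integral identity holds for each fixed test function only outside a test-function-dependent exceptional set, so passing to a single exceptional set valid for all test functions forces the use of a \emph{countable} separating family --- which is precisely why (\ref{DSepPoints1}) is stated with a sequence $\{f_k\}$ rather than merely ``$\mathrm{D}$ separates points'' --- after which one still needs the two-way Fubini argument to convert ``for a.e. $x$, a.e. $z$'' back into a genuine $\mu^{\otimes2}$-a.e. statement. All remaining steps are the routine change of variables above.
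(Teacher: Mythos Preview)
Your argument is correct and follows essentially the same route as the paper: both directions rest on the change-of-variables identity $d\mu(gz)=\lambda_g\,d\mu(z)$ to rewrite $\operatorname{K}\operatorname{L}_g f$ and $\operatorname{L}_g\operatorname{K}f$ as integrals against $f$, and the ``only if'' direction hinges on exactly the point you isolate---using the countability of $\{f_k\}$ to pass to a single null set before invoking (\ref{DSepPoints1}), then Fubini to upgrade to a $\mu^{\otimes2}$-a.e.\ statement. The paper's version tracks the various exceptional sets ($X_f$, $V_{g,f}$, etc.) a bit more explicitly, but the structure and the ideas are the same.
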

\begin{proof}For every $f\in\mathrm{D}$ let $X_f\subset G/H$ be the $\mu$-null set on which the equality (\ref{TDef}) does not hold.

$\Rightarrow$\quad Assume that $\operatorname{K}$ is homogeneous,
$$
\Bigl(\forall g\in G\Bigr)\Bigl(\forall f\in \mathrm{D}\Bigr)\Bigl(\exists V_{g,f}\subset G/H\Bigr)\quad\mu(V_{g,f})=0\quad\wedge\quad\left(\forall x\not\in V_{g,f}\right)\,\operatorname{L_g}\operatorname{K}f(x)=\operatorname{K}\operatorname{L_g}f(x).
$$
Define
$$
V_g\doteq\bigcup_{k=1}^\infty V_{g,\operatorname{L}_{g^{-1}}f_k},\quad U_g\doteq\bigcup_{k=1}^\infty gX_{\operatorname{L}_{g^{-1}}f_k},\quad W_g\doteq\bigcup_{k=1}^\infty X_{f_k},\quad\forall g\in G,
$$
where $\{f_k\}_{k=1}^\infty\subset\mathrm{D}$ are from (\ref{DSepPoints1}). Then by $\sigma$-additivity of $\mu$,
$$
\mu(V_g)\le\sum_{k=1}^\infty\mu(V_{g,\operatorname{L}_{g^{-1}}f_k})=0,\quad\mu(U_g)\le\sum_{k=1}^\infty\mu(gX_{\operatorname{L}_{g^{-1}}f_k})=0,
$$
$$
\mu(W_g)\le\sum_{k=1}^\infty\mu(X_{f_k})=0,\quad\forall g\in G.
$$
Take an $x\in G/H\setminus(V_g\cup U_g\cup W_g)$. For all $k\in\mathbb{N}$ we have
$$
\operatorname{L}_g\operatorname{K}\operatorname{L}_{g^{-1}}f_k(x)=\operatorname{K}\operatorname{L}_{g^{-1}}f_k(g^{-1}x)\overset{x\not\in gX_{\operatorname{L}_{g^{-1}}f_k}}{=\!=}\int\limits_{G/H}K(g^{-1}x,y)\operatorname{L}_{g^{-1}}f_k(y)d\mu(y)
$$
$$
=\int\limits_{G/H}K(g^{-1}x,g^{-1}z)\lambda_{g^{-1}}f_k(z)d\mu(z)\overset{x\not\in V_{g,\operatorname{L}_{g^{-1}}f_k}}{=\!=}\operatorname{K}f_k(x)\overset{x\not\in X_{f_k}}{=\!=}\int\limits_{G/H}K(x,z)f_k(z)d\mu(z),
$$
whence
$$
\int\limits_{G/H}\left(K(g^{-1}x,g^{-1}z)\lambda_{g^{-1}}-K(x,z)\right)f_k(z)d\mu(z)=0,\quad\forall k\in\mathbb{N}.
$$
By separation property (\ref{DSepPoints1}),
$$
\Bigl(\exists Y_g^x\subset G/H\Bigr)\quad\mu(Y_g^x)=0\quad\wedge\quad\left(\forall y\not\in Y_g^xG\right)\,K(g^{-1}x,g^{-1}y)=\frac1{\lambda_{g^{-1}}}K(x,y).
$$
Denote $Y_g\doteq\{(x,y)\in G/H\times G/H\,|\quad y\in Y_g^x\}$, whence
$$
\mu^{\otimes2}(Y_g)=\int\limits_{G/H}\mu(Y_g^x)d\mu(x)=0.
$$
It follows that
$$
\mu^{\otimes2}\Bigl(\bigl((V_g\cup U_g\cup W_g)\times G/H\bigr)\bigcup Y_g\Bigr)=0,
$$
so that the equation in the statement (\ref{KEq}) holds $\mu^{\otimes2}$-a.e.

$\Leftarrow$\quad Assume that
$$
\Bigl(\forall g\in G\Bigr)\Bigl(\exists Z_g\subset G/H\times G/H\Bigr)\quad\mu^{\otimes2}(Z_g)=0\quad\wedge\quad\left(\forall (x,y)\not\in Z_g\right)\, K(gx,gy)=\frac1{\lambda_g}K(x,y).
$$
Denote
$$
Z_g^x\doteq\{y\in G/H\,\vline\quad(x,y)\in Z_g\},\quad Y_g\doteq\{x\in G/H\,\vline\quad\mu(Z_g^x)>0\},\quad\forall g\in G,\quad\forall x\in G/H,
$$
and observe that
$$
\mu^{\otimes2}(Z_g)=\int_{Y_g}\mu(Z_g^x)d\mu(x)=0,
$$
so that $\mu(Y_g)=0$ for all $g\in G$. Fix a $g\in G$ and an $f\in\mathrm{D}$, and take any $x\in G/H\setminus(gX_f\cup X_{\operatorname{L}_gf}\cup Y_{g^{-1}})$. Then
$$
\operatorname{L_g}\operatorname{K}f(x)=\operatorname{K}f(g^{-1}x)\overset{x\not\in gX_f}{=\!=}\int\limits_{G/H}K(g^{-1}x,y)f(y)d\mu(y)=\int\limits_{G/H}K(g^{-1}x,g^{-1}z)f(g^{-1}z)d\mu(g^{-1}z)
$$
\begin{equation}
=\int\limits_{G/H}K(g^{-1}x,g^{-1}z)\operatorname{L_g}f(z)\lambda_{g^{-1}}d\mu(z)\overset{x\not\in Y_{g^{-1}}}{=\!=}\int\limits_{G/H}K(x,z)\operatorname{L_g}f(z)d\mu(z)\overset{x\not\in X_{\operatorname{L}_gf}}{=\!=}\operatorname{K}\operatorname{L_g}f(x).\label{KernelNec}
\end{equation}
But
$$
\mu(gX_f\cup X_{\operatorname{L}_gf}\cup Y_{g^{-1}})\le\mu(gX_f)+\mu(X_{\operatorname{L}_gf})+\mu(Y_{g^{-1}})=0,
$$
which means that (\ref{KernelNec}) holds for $\mu$-a.e. $x\in G/H$, and thus $\operatorname{L_g}\operatorname{K}f=\operatorname{K}\operatorname{L_g}f$ in the sense of $L(G/H,\mu)$. $\Box$
\end{proof}

\begin{remark} Note that the separation property (\ref{DSepPoints1}) is used only in the sufficiency implication of \textnormal{Theorem \ref{HomOptoKerProp}}. Therefore the necessity part in \textnormal{Theorem \ref{HomOptoKerProp}} remains true without assuming the separation property (\ref{DSepPoints1}).
\end{remark}

\subsection{On a sufficient condition for the separation property}

In this section we want to give a reasonable sufficient condition for the separation property (\ref{DSepPoints1}) to hold for the domain $\mathrm{D}\subset L(G/H,\mu)$ of an integral operator $\operatorname{K}$ with locally integrable integral kernel $K$. We will do this for the most common case where $G/H$ is a locally compact, Hausdorff, second countable space with its Borel measurable structure. This is always true if $G/H$ is a manifold with countably many connected components, or when $G$ is a locally compact, Hausdorff, second countable topological group, and $H\subset G$ a closed subgroup.

\begin{lemma}\label{SepLemma} Let $X$ be a locally compact, Hausdorff, second countable space, and let $\mu$ be a Radon measure on $X$. There exists a sequence $\{f_k\}_{k=1}^\infty\subset C_c(X,\mathbb{R})$ of compactly supported continuous real functions such that
$$
(\forall F\in L(X,\mu))\left[(\forall k\in\mathbb{N})\int\limits_XF(x)f_k(x)d\mu(x)=0\right]\quad\Rightarrow\quad F=0.
$$
\end{lemma}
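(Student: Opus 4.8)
The plan is to build the sequence $\{f_k\}$ by hand from a compact exhaustion of $X$, and then to reduce the vanishing of $F$ to the uniqueness part of the Riesz--Markov representation theorem. Since $X$ is locally compact, Hausdorff and second countable, it is $\sigma$-compact and metrizable; I would fix an exhaustion $X=\bigcup_{n\ge1}K_n$ by compacta with $K_n\subset\operatorname{int}(K_{n+1})$. Each $K_{n+1}$ is a compact metric space, so $C(K_{n+1})$ is sup-norm separable, and the linear subspace $V_n\doteq\{f\in C_c(X,\mathbb R):\operatorname{supp}f\subset K_n\}$ embeds isometrically into it by restriction (the support condition makes the restriction norm-preserving and injective), hence $V_n$ is separable; I would take a countable sup-norm-dense subset $\mathcal D_n\subset V_n$. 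In addition I would include, via Urysohn's lemma, functions $\psi_n\in C_c(X,\mathbb R)$ with $0\le\psi_n\le1$, $\psi_n\equiv1$ on $K_n$, and $\operatorname{supp}\psi_n\subset\operatorname{int}(K_{n+1})$. The desired sequence is any enumeration of $\bigcup_n\mathcal D_n\cup\{\psi_n:n\ge1\}$, which is countable.

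Now suppose $F\in L(X,\mu)$ satisfies $\int_X Ff_k\,d\mu=0$ for every $k$, each such integral being in particular absolutely convergent, as the equality ``$=0$'' tacitly requires. First I would test against the $\psi_n$: this yields $\int_{K_n}|F|\,d\mu\le\int_X|F|\psi_n\,d\mu<\infty$ for all $n$, so $F\in L^1_{\mathrm{loc}}(X,\mu)$. Next, for an arbitrary $g\in C_c(X,\mathbb R)$, compactness of $\operatorname{supp}g$ places it inside $\operatorname{int}(K_N)$ for some $N$, so $g\in V_N$ and there are $f^{(m)}\in\mathcal D_N$ with $\|f^{(m)}-g\|_\infty\to0$ and supports contained in $K_N$; since $|Ff^{(m)}|\le(\|g\|_\infty+1)\,|F|\,\mathbf 1_{K_N}\in L^1(\mu)$ for large $m$, dominated convergence gives $\int_X Fg\,d\mu=\lim_m\int_X Ff^{(m)}\,d\mu=0$. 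When $\mathbb F=\mathbb C$ one applies this to $\operatorname{Re}F$ and $\operatorname{Im}F$ against the real-valued $g$, so it suffices to treat real $F$.

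Finally I would deduce $F=0$ $\mu$-a.e. from $\int_X Fg\,d\mu=0$ for all $g\in C_c(X,\mathbb R)$. Fixing $n$, the measure $|F|\,\mathbf 1_{\operatorname{int}(K_n)}\,d\mu$ is finite and Radon, so $C_c(\operatorname{int}(K_n))$ is dense in its $L^1$; approximating $\operatorname{sgn}(F)\,\mathbf 1_{\operatorname{int}(K_n)}$ in that $L^1$ by functions $g_m\in C_c(\operatorname{int}(K_n))$ with $|g_m|\le1$ yields $\int_{\operatorname{int}(K_n)}|F|\,d\mu=\lim_m\int_X Fg_m\,d\mu=0$; equivalently, $F\,d\mu$ restricted to $\operatorname{int}(K_n)$ is a finite signed Radon measure annihilating $C_c$, hence zero by Riesz--Markov. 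Since $\bigcup_n\operatorname{int}(K_n)=X$, this forces $F=0$ in $L(X,\mu)$. All steps are standard; the subtlest point — and the only one I would call a genuine, if mild, obstacle — is the integrability bookkeeping, i.e.\ guaranteeing that every test integral is absolutely convergent, which is exactly why the auxiliary functions $\psi_n$ are thrown in so as to force $F\in L^1_{\mathrm{loc}}$. Keeping $\{f_k\}$ countable rests on the separability of $C(K)$ for compact $K\subset X$, afforded by second countability; beyond that I expect no difficulty.
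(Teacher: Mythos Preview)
Your proof is correct and follows essentially the same route as the paper: a compact exhaustion, separability of each $C_0(K_n,\mathbb{R})\subset C(K_n,\mathbb{R})$ to produce a countable dense family in $C_c(X,\mathbb{R})$, the observation that the vanishing hypothesis forces $F\in L^1_{\mathrm{loc}}$, and then passage from the dense family to all of $C_c$ before invoking the standard ``$\int Fg=0$ for all $g\in C_c$ implies $F=0$'' fact. The only cosmetic difference is in how $L^1_{\mathrm{loc}}$ is obtained: you throw the Urysohn cut-offs $\psi_n$ into the sequence so that $\int|F|\psi_n<\infty$ is immediate, whereas the paper argues by contradiction using pointwise density of the $f_k$ (some $f_{k_0}$ must be nonzero at any given point, forcing $|F|$ to be locally integrable there).
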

\begin{proof} We first note that $X$ is $\sigma$-compact, and therefore we can arrange for a sequence $\{K_n\}_{n=1}^\infty$ of compact sets such that
$$
(\forall n\in\mathbb{N})\,K_n\Subset\mathring{K}_{n+1},\quad\bigcup_{n=1}^\infty K_n=X.
$$
Then $C_c(X,\mathbb{R})$ has the inductive limit topology
$$
C_c(X,\mathbb{R})=\bigcup_{n=1}^\infty C_0(K_n,\mathbb{R}),\quad C_0(K_n,\mathbb{R})=\left\{f\in C_c(X,\mathbb{R})\,\vline\quad\supp f\subset K_n\right\},\quad\forall n\in\mathbb{N}.
$$
Each $C_0(K_n,\mathbb{R})$ is a subspace of the separable metric space $C(K_n,\mathbb{R})$ with uniform norm, and is therefore separable. Let $\{f_k\}_{k=1}^\infty$ be the union of countable dense sets in all $C_0(K_n,\mathbb{R})$, which is thus dense in $C_c(X,\mathbb{R})$. Now suppose that $F\in L(X,\mu)$ is such that
$$
(\forall k\in\mathbb{N})\int\limits_XF(x)f_k(x)d\mu(x)=0.
$$
Then $F\in L^1_{\mathrm{loc}}(X,\mu)$. For otherwise $\exists x_0\in X$ such that $\forall\Delta\subset X$ open,
$$
x_0\in\Delta\quad\Rightarrow\quad\int\limits_\Delta|F(x)|d\mu(x)=+\infty.
$$
By density, $\exists k_0\in\mathbb{N}$ such that $f_{k_0}(x_0)\neq 0$. By continuity, $\exists\Delta_0\subset X$ open such that
$$
x_0\in\Delta_0\quad\wedge\quad(\forall x\in\Delta_0)\,|f_{k_0}(x)|>\frac12|f_{k_0}(x_0)|>0.
$$
But then
$$
\int\limits_{\Delta_0}|F(x)||f_{k_0}(x)|d\mu(x)>\frac12|f_{k_0}(x_0)|\int\limits_{\Delta_0}|F(x)|d\mu(x)=+\infty,
$$
in contradiction to $Ff_{k_0}\in L^1(X,\mu)$. Now that $F\in L^1_{\mathrm{loc}}(X,\mu)$, we have that the linear functional
$$
C_c(X,\mathbb{R})\ni f\mapsto\int\limits_XF(x)f(x)d\mu(x)\in\mathbb{C}
$$
is continuous, and since it vanishes on the dense set $\{f_k\}_{k=1}^\infty$, it is identically zero. That this implies $F=0$ ($\mu$-a.e.) can be shown by classical arguments. $\Box$
\end{proof}

Now if the space $G/H$ is locally compact, Hausdorff and second countable, and if the integral kernel $K\in L^1_{\mathrm{loc}}(G/H\times G/H,\mu^{\otimes 2})$ (or at least $K(x,.)\in L^1_{\mathrm{loc}}(G/H,\mu)$ for $\mu$-a.e. $x\in G/H$) then there is no loss of generality in assuming that $C_c(G/H,\mathbb{R})\subset\mathrm{D}$. Therefore the separation property (\ref{DSepPoints1}) can be satisfied with the sequence $\{f_k\}_{k=1}^\infty$ from Lemma \ref{SepLemma}.

\subsection{Homogeneous (strongly homogeneous) integral kernels}\label{SubSec: Homogeneous integral kernels}

Let $K\in L(G/H\times G/H,\mu^{\otimes2})$ be the integral kernel of an integral operator $\operatorname{K}:\mathrm{D}\to L(G/H,\mu)$ as in (\ref{TDef}). We saw that the homogeneity of the operator $\operatorname{K}$ is equivalent to the weak homogeneity condition (\ref{WeakHomKer}) for the kernel $K$. But in order to derive our most important and applicable results we will need the stronger homogeneity condition (\ref{StrongHomKer}).
\begin{remark}\label{remark:From this moment on}
From this moment on, we will drop the word "strong" and call the kernels which satisfy (\ref{StrongHomKer}) homogeneous kernels, since our next steps are associated with just such kernels. Consider a modification of the weak homogeneity condition (\ref{WeakHomKer}) where the order of the two quantifiers is reversed,
\begin{equation*}
\Bigl(\mu^{\otimes2}-\mbox{a.e.}\,\,(x,y)\in G/H\times G/H\Bigr)\Bigl(\forall g\in G\Bigr)\quad K(gx,gy)=\frac1{\lambda_g}K(x,y).
\end{equation*}
If $Z\subset G/H\times G/H$ is the $\mu^{\otimes2}$-null set on which this homogeneity property does not hold, then it is easy to see that both $Z$ and $G/H\times G/H\setminus Z$ are $G$-invariant. In particular, assuming $K|_Z=0$ does not alter $K$ in the setting of $L(G/H\times G/H,\mu^{\otimes2})$ on one hand, and provides
\begin{equation}
\Bigl(\forall(x,y)\in G/H\times G/H\Bigr)\Bigl(\forall g\in G\Bigr)\quad K(gx,gy)=\frac1{\lambda_g}K(x,y)\label{KEq1}
\end{equation}
on the other hand. This will therefore be our definition of a homogeneous integral kernel. Note that formula (\ref{KEq1}) is the same as (\ref{StrongHomKer}) specified for $M=G/H$.
\end{remark}
In order to avoid misleading let us also clarify for the reader's convenience the following issue.
\begin{remark}
Regarding the above said, let us recall that a weakly homogeneous kernel corresponds to a homogeneous (integral) operator (Theorem \ref {HomOptoKerProp}), and homogeneous kernels (previously referred to as strongly homogeneous kernels) generate a subclass of integral operators which of course are homogeneous operators in the sense of our definition. Thus, according to our definition, an integral operator with a homogeneous kernel is necessarily a homogeneous integral operator, but the converse is not true in general.
\end{remark}

That (\ref{WeakHomKer}) does not always imply (\ref{StrongHomKer}) can be seen in the following example.
\begin{example} Let $G=\mathbb{R}$, $H=\{0\}$, $d\mu(x)=dx$ and $\lambda=\id$. Consider $K:\mathbb{R}^2\to\mathbb{R}$ given by
$$
K(x,y)=\begin{cases}
0 & \mbox{if}\quad y-\lfloor y\rfloor=x-y-\lfloor x-y\rfloor\\
1 & \mbox{else}
\end{cases}
$$
For every $r\in\mathbb{R}$ set $K_r(x,y)\doteq K(x+r,y+r)$ for all $(x,y)\in\mathbb{R}^2$. Then $\mu(F^{-1}(0))=\mu(F_r^{-1}(0))=0$, thus $F(x+r,y+r)=F(x,y)$ for $\mu$-a.e. $(x,y)\in\mathbb{R}$, as in (\ref{KEq}). However, for a fixed $(x,y)\in\mathbb{R}^2$, the condition $F(x+r,y+r)=F(x,y)$ for all $r\in\mathbb{R}$ implies
$$
\Bigl(\forall r\in \mathbb{R}\Bigr)\quad y+r-\lfloor y+r\rfloor=y-\lfloor y\rfloor\quad\vee\quad y+r-\lfloor y+r\rfloor\neq x-y-\lfloor x-y\rfloor
$$
$$
\Rightarrow\quad\Bigl(\forall r\in\mathbb{R}\setminus\mathbb{Z}\Bigr)\quad y+r-\lfloor y+r\rfloor\neq x-y-\lfloor x-y\rfloor,
$$
which can hold only if $x-2y\in\mathbb{Z}$ (otherwise take $r=x-2y$). Thus, the condition $F(x+r,y+r)=F(x,y)$ for all $r\in\mathbb{R}$ holds only on the zero measure set where $x-2y\in\mathbb{Z}$, which is as far as possible from the statement (\ref{KEq1}).
\end{example}

However, conditions (\ref{WeakHomKer}) and (\ref{StrongHomKer}), in the current context written in the form (\ref{KEq}) and (\ref{KEq1}), respectively, are expected to be equivalent under practically reasonable assumptions on $K$.

The next theorem provides a general form for homogeneous (strongly homogeneous) kernels. Further in Section \ref{Homogeneous integral kernels and homogeneous integral operators over some domains}, in each specific case we will supply more explicit representations of this general form.

\begin{theorem} A function $K\in L(G/H\otimes G/H,\mu)$ satisfies the strong homogeneity condition (\ref{StrongHomKer}) if and only if
\begin{equation}
K(aH,bH)=\frac{F(a^{-1}bH)}{\lambda_a},\quad\forall(aH,bH)\in G/H\times G/H,\quad F\in\mathcal{F}^\lambda_H.\label{KProp}
\end{equation}
\end{theorem}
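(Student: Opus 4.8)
The plan is to establish the two implications directly. The class $\mathcal{F}^\lambda_H$ enters precisely because the right-hand side of (\ref{KProp}) refers to an actual element $a\in G$ representing the coset $aH$ (both through the point $a^{-1}bH$ and through the scalar $\lambda_a$), so one needs the defining relation of $\mathcal{F}^\lambda_H$ together with the homomorphism property $\lambda_{gh}=\lambda_g\lambda_h$ to render that expression independent of the choice of representative.

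For the ``if'' direction, suppose $K(aH,bH)=F(a^{-1}bH)/\lambda_a$ with $F\in\mathcal{F}^\lambda_H$. I would first check that this is well defined. Replacing $b$ by $bh$ with $h\in H$ changes nothing, since $a^{-1}bhH=a^{-1}bH$ as a point of $G/H$ and $\lambda_a$ is untouched. Replacing $a$ by $ah$ leaves the coset $aH$ unchanged but turns $F(a^{-1}bH)/\lambda_a$ into $F(h^{-1}a^{-1}bH)/(\lambda_a\lambda_h)$; using $F(h^{-1}x)=\lambda_{h^{-1}}^{-1}F(x)=\lambda_hF(x)$ for $F\in\mathcal{F}^\lambda_H$, this is again $F(a^{-1}bH)/\lambda_a$. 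Once $K$ is seen to be well defined (and measurable, being assembled from $F$, the character $\lambda$ and the quotient maps by measurable operations), the strong homogeneity condition (\ref{StrongHomKer}) for $M=G/H$ is a one-line computation: $K(gaH,gbH)=F\bigl((ga)^{-1}(gb)H\bigr)/\lambda_{ga}=F(a^{-1}bH)/(\lambda_g\lambda_a)=\lambda_g^{-1}K(aH,bH)$, again by multiplicativity of $\lambda$.

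For the ``only if'' direction, assume $K$ satisfies (\ref{StrongHomKer}) pointwise and define $F(x)\doteq K(\id H,x)$ on $G/H$. Two things must be verified. First, $F\in\mathcal{F}^\lambda_H$: applying (\ref{StrongHomKer}) with $g=h\in H$, $x=\id H$ and $y$ arbitrary, and using $h(\id H)=\id H$, we obtain $F(hy)=K(\id H,hy)=K(h(\id H),hy)=\lambda_h^{-1}K(\id H,y)=\lambda_h^{-1}F(y)$, which is exactly the defining relation of $\mathcal{F}^\lambda_H$ (the vanishing of $F$ on $\mathcal{X}^\lambda_H$ is then automatic). For measurability of $F$ I would note that $b\mapsto K(\id H,bH)$ is the composition of the measurable maps $b\mapsto(\id,b)$, $\operatorname{q}_H\times\operatorname{q}_H$ and $K$, and is right-$H$-invariant, hence descends to a measurable function on $G/H$. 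Second, the formula (\ref{KProp}): applying (\ref{StrongHomKer}) with $g=a$, $x=\id H$, $y=a^{-1}bH$, so that $ax=aH$ and $ay=bH$, gives $K(aH,bH)=\lambda_a^{-1}K(\id H,a^{-1}bH)=F(a^{-1}bH)/\lambda_a$, as required.

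I do not anticipate a genuine obstacle; the argument is bookkeeping with coset arithmetic and the character identity. The one point needing a little care is the tension between the strictly pointwise condition (\ref{StrongHomKer}) and the $L$-space (equivalence-class) formulation: one must fix a bona fide representative of $K$ before evaluating it along $\{\id H\}\times G/H$, and then confirm, as above, that the resulting $F$ is honestly measurable rather than merely measurable off a null set. I would also note in passing that $F$ is uniquely determined by $K$, since setting $a\in H$ in (\ref{KProp}) forces $F(x)=K(\id H,x)$.
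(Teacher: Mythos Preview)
Your argument is correct and follows essentially the same route as the paper's own proof: define $F(x)=K(\id H,x)$, use strong homogeneity with $g=a$ (equivalently $g=a^{-1}$) to obtain the representation, and verify $F\in\mathcal{F}^\lambda_H$ via $g=h\in H$; conversely, a one-line computation with multiplicativity of $\lambda$ gives the homogeneity. Your extra care about well-definedness under change of coset representative and about measurability are welcome additions that the paper leaves implicit.
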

\begin{proof}Let $K$ satisfy condition (\ref{StrongHomKer}), and here we refer to its form as given in (\ref{KEq1}). Define $F\in L(G/H,\mu)$ by setting
$$
F(aH)\doteq K(\id H,aH),\quad\forall aH\in G/H.
$$
For every $(aH,bH)\in G/H\times G/H$ choose $g=a^{-1}$ to find that
$$
K(aH,bH)=\frac1{\lambda_a}K(\id H,a^{-1}bH)=\frac1{\lambda_a}F(a^{-1}bH).
$$
Moreover,
$$
F(haH)=K(\id H,haH)=K(h\id H,haH)=\frac{K(\id H,aH)}{\lambda_h}=\frac{F(aH)}{\lambda_h},\quad\forall aH\in G/H,\quad\forall h\in H,
$$
showing that $F\in\mathcal{F}^\lambda_H$. Conversely, let $K$ satisfy (\ref{KProp}). Then
$$
K(gaH,gbH)=\frac{F(a^{-1}bH)}{\lambda_{ga}}=\frac{K(aH,bH)}{\lambda_g},\quad\forall(aH,bH)\in G/H\times G/H,\quad\forall g\in G,
$$
as desired. $\Box$
\end{proof}

\begin{remark}\label{SingKerRemark} In this light, the description (\ref{FlambdaHDesc}) implies that homogeneous kernels are generally expected to exhibit exceptional or singular behaviour on $\mathcal{X}^\lambda_H$ in Case B. Also, the dimension of the space of all homogeneous kernels is the cardinality of $H\backslash M_*$. In particular, if $H\backslash M_*$ is a single point then there is up to a scalar factor a unique homogeneous kernel.
\end{remark}

\section{Homogeneous integral kernels and homogeneous integral operators over some domains}\label{Homogeneous integral kernels and homogeneous integral operators over some domains}

Given a concrete measure space $(M,\mu)$, there will generally be infinitely many possible choices of transitively acting $G\subset\mathrm{Dil}(M,\mu)$ to work with, but describing any one of them explicitly may not be an easy job. It will often be associated with solving functional or differential equations resulting from the condition $\mu\circ\varphi=\lambda_\varphi\mu$. Once $G$ is fixed (then so is $\lambda$), the passage from original local coordinates on $M$ to those adapted to the homogeneous space structure $G/H$ may be another challenge, which is necessary in order to translate the general results about homogeneous kernels to the original local coordinates. Often these two tasks can be accomplished simultaneously by finding a measure space isomorphism with another measure space $(\tilde M,\tilde\mu)$, for which the problem has been solved before.

Below we will discuss typical examples where both dilations and homogeneous integral kernels can be found explicitly.

\subsection{Homogeneous integral kernels on the cylinder $\mathbb{R}\times\mathbb{T}$}

Let us consider the case of the cylinder $M=G=\mathbb{R}\times\mathbb{T}$ with global coordinates $g=(z,\theta)$ and composition law
\begin{equation}
(z_x,\theta_x)(z_y,\theta_y)=(z_x+z_y,\theta_x+\theta_y\mod2\pi).
\end{equation}
Every character $\lambda$ of $G$ is then the product $\lambda=\lambda^{(1)}\lambda^{(2)}$ of a character $\lambda^{(1)}$ of $\mathbb{R}$ and a character $\lambda^{(2)}$ of $\mathbb{T}$. The latter is a compact group and cannot have non-trivial real characters, so that $\lambda^{(2)}=1$. On the other hand, the group $\mathbb{R}$ has only real characters of the form
\begin{equation}
\lambda^{(1)}_z=e^{\omega z},\quad\omega\in\mathbb{R}_{+}\cup\{0\}.
\end{equation}
For a non-trivial character, the exponent $\omega>0$ can be fixed to $\omega=2$ (for conformity with standard notations) by coordinate transformation $z\to\omega z/2$ (which is a group automorphism), so that without loss of generality we can take
\begin{equation}
\lambda_{(z,\theta)}=e^{2z}.
\end{equation}
The measure dilated according to this character is $d\mu(z,\theta)=e^{2z}dzd\theta$. Now the strong homogeneity condition (\ref{StrongHomKer}) becomes
\begin{equation}
K(z_x+a,\theta_x+\varphi\mod2\pi;z_y+a,\theta_y+\varphi\mod2\pi)=e^{-2a}K(z_x,\theta_x;z_y,\theta_y),\quad\forall (a,\varphi)\in\mathbb{R}\times\mathbb{T},\label{KernelProblemCylinder}
\end{equation}
and its explicit general solution is, in accordance with formula (\ref{KProp}),
\begin{equation}
K(z_x,\theta_x;z_y,\theta_y)=e^{-z_x-z_y}F(z_x-z_y,\theta_x-\theta_y+2\pi\mod2\pi),\label{GeneralSolutionCylinder}
\end{equation}
where $F\in\mathcal{F}_H^\lambda=L(G,\mu)$ is arbitrary.

\subsection{Homogeneous integral kernels on $\mathbb{R}^2$}\label{R2HomKerExample}

Consider $M=\mathbb{R}^2\setminus\{0\}$ with polar coordinates $(r,\theta)$ and $G=\mathbb{R}\times\mathbb{T}$ with group coordinates $(a,\varphi)$ acting by dilations and central rotations,
\begin{equation}
(a,\varphi)(r,\theta)=(e^ar,\theta+\varphi\mod2\pi).\label{R2Dilations}
\end{equation}
The measure in question is the Lebesgue measure $d\mu(r,\theta)=rdrd\theta$ (or any normalization thereof), and the expansion coefficient is
\begin{equation}
\lambda_{(a,\varphi)}=\frac{d\mu(e^ar,\theta+\varphi\mod2\pi)}{d\mu(r,\theta)}=e^{2a}.
\end{equation}
The strong homogeneity condition (\ref{StrongHomKer}) in this language becomes
\begin{equation}
K(e^a r_x,\theta_x+\varphi\mod2\pi;e^a r_y,\theta_y+\varphi\mod2\pi)=\frac1{e^{2a}}K(r_x,\theta_x;r_y,\theta_y),\quad\forall (a,\varphi)\in G.\label{KernelProblemEuclid}
\end{equation}
The action of $G$ on $M$ is simply transitive, and the identification $M\simeq G$ is done through the diffeomorphism $(r,\theta)=(e^z,\varphi)$. With this identification the homogeneity condition (\ref{KernelProblemEuclid}) is transferred to the cylinder exactly as the condition (\ref{KernelProblemCylinder}), of which the general solution (\ref{GeneralSolutionCylinder}) can be rewritten in polar coordinates as
\begin{equation}
K(r_x,\theta_x;r_y,\theta_y)=\frac1{r_xr_y}F\left(\frac{r_x}{r_y},\theta_x-\theta_y+2\pi\mod2\pi\right).\label{GeneralSolutionEuclid}
\end{equation}
\begin{remark}\label{Rem:HomogeneousOperatorsProblemEuclid}
Note that the literature on the subject of integral operators with homogeneous kernels on $\mathbb{R}^2$ (or $\mathbb{R}^n$) contains statements that a homogeneous kernel $K(x,y)$ is a function of $|x|$, $|y|$ and the scalar product $\langle x,y\rangle=|x||y|\cos(\theta_x-\theta_y)$. We can see from formula (\ref{GeneralSolutionEuclid}) that this is not correct. Indeed, a direct inspection will show that any periodic function of $\theta_x-\theta_y$ would work, not necessarily a function of $\cos(\theta_x-\theta_y)$ which is contained in the scalar product. Perhaps the discrepancy comes from the confusion between the rotation group $\mathrm{SO}(2)$ and the full orthogonal group $\mathrm{O}(2)$. If we insist that the kernel $K$ be invariant not only under rotations but also reflections then, indeed, one has to restrict to even functions of $\theta_x-\theta_y$, which must be functions of $\cos(\theta_x-\theta_y)$. However, here we do not consider reflections, and therefore, the correct formula is (\ref{GeneralSolutionEuclid}).
\end{remark}

\subsubsection{Hadamard-Bergman convolution operators}

As an example of a class of integral operators with homogeneous kernels on $\mathbb{R}^2$, or more precisely, the unit disk $\mathbb{D}\subset\mathbb{C}=\mathbb{R}^2$, we consider the so-called Hadamard-Bergman convolution operators introduced in \cite{KS-HB-2020} (see also \cite{KA-HB-2021, KS-HB-2021} for some generalizations),
$$
\operatorname{K}f(z)=\int\limits_\mathbb{D}g(w)f(z\overline{ w})d\mu(w),\,\,\, z,w\in\mathbb{D},
$$
where $f,g\in L^1_{\mathrm{loc}}(\mathbb{D},\mu)$ and $\mu$ is the Lebesgue measure normalized such that the measure of $\mathbb{D}$ is 1. Performing a change of variables to $\xi=z\overline{ w}$ we can rewrite the above integral as
$$
\operatorname{K}f(z)=\frac1{|z|^2}\int\limits_{|z|\cdot\mathbb{D}}g\left(\frac{\overline{\xi}}{\overline{ z}}\right)f(\xi)d\mu(\xi).
$$
Set $M=\mathbb{D}\setminus\{0\}$ and introduce the kernel function $K\in L(M\times M,\mu^{\otimes2})$ by
$$
K(z,w)\doteq\begin{cases}
\frac1{|z|^2}g\left(\frac{\overline{ w}}{\overline{ z}}\right),\quad\mbox{if}\quad|w|<|z|,\\
0\quad\mbox{else}.
\end{cases}
$$
Then we can see that
$$
\operatorname{K}f(z)=\int\limits_\mathbb{D}K(z,w)f(w)d\mu(w).
$$
If the cylinder group $G=\mathbb{R}\times\mathbb{T}$ acts on $M\subset\mathbb{R}^2$ as in formula (\ref{R2Dilations}) then in complex variables it becomes
$$
(a,\varphi)z=e^{a+\imath\varphi}z,\quad\forall z\in M,\quad\forall(a,\varphi)\in G.
$$
It can be easily checked that
$$
K(e^{a+\imath\varphi}z,e^{a+\imath\varphi}w)=\frac1{e^{2a}}K(z,w),\quad\forall z,w\in M,\quad\forall(a,\varphi)\in G,
$$
in full accord with formula (\ref{KernelProblemEuclid}). Thus, Hadamard-Bergman convolution operators are integral operators with homogeneous kernels.

\subsection{Homogeneous integral kernels on a disk in $\mathbb{R}^2$ with a radial measure}

Let $M=(0,R)\times\mathbb{T}\subset\mathbb{R}^2$ be an open disk with radius $R\in(0,+\infty]$, with $M=\mathbb{D}\setminus\{0\}$ and $M=\mathbb{R}^2\setminus\{0\}$ being the most important cases. We will work in the polar coordinates $(r,\theta)$ on $M$, such that $r\in(0,R)$ and $\theta\in[0,2\pi)$. Consider a radial measure $\mu$ on $M$ given by
\begin{equation}
d\mu(r,\theta)=\frac{\gamma(r^2)rdrd\theta}{\pi},\label{RadMeasDisk}
\end{equation}
where $\gamma:(0,R^2)\to[0,+\infty)$ is an a.e. positive locally integrable function. We are looking for a group action of $G=\mathbb{R}\times\mathbb{T}$ on $M$ by radial dilations and rotations as
\begin{equation}
(a,\varphi)(r,\theta)=(r_*(r;a),\theta+\varphi\mod2\pi),\quad\forall(r,\theta)\in M,\quad\forall(a,\varphi)\in G.\label{RadMeasDiskAction}
\end{equation}
This is, of course, not the only form of action for a group of dilations that can be considered for the measure $\mu$, but is arguably the most natural one. The dilation condition $\mu((a,\varphi)\cdot.)=\lambda_{(a,\varphi)}\mu(.)$ can be written as
$$
\gamma(r_*^2)r_*dr_*=\lambda_{(a,\varphi)}\gamma(r^2)rdr,
$$
which is an ordinary differential equation for the function $r\mapsto r_*(r;a)$ for every fixed $a\in\mathbb{R}$. Instead of solving this equation we will look for a transformation $M\ni(r,\theta)\mapsto(\rho(r),\theta)\in(0,\infty)\times\mathbb{T}$ such that the measure $\mu$ is transformed into the Lebesgue measure,
$$
\gamma(r^2)rdr=\rho d\rho.
$$
Denote by $\Gamma_C:(0,R^2)\to(0,+\infty)$ the strictly increasing function
$$
\Gamma_C(t)\doteq\int^t\gamma(s)ds+C
$$
for all $C\in\mathbb{R}$ that make $\Gamma_C$ everywhere positive. This gives our desired solution $\rho(r)^2=\Gamma_C(r^2)$. Then $(M,\mu)\simeq\Gamma_C((0,R^2))\subset\mathbb{R}\setminus\{0\}$, so that in coordinates $(\rho,\theta)$ our problem is reduced to Example \ref{R2HomKerExample}. But in $\mathbb{R}\setminus\{0\}$ we already know how the radial dilations act,
$$
(a,\varphi)(\rho,\theta)=(e^a\rho,\theta+\varphi\mod2\pi),\quad\forall(\rho,\theta)\in\mathbb{R}\setminus\{0\},\quad\forall(a,\varphi)\in G,
$$
which brings us to the equations
\begin{equation}
\Gamma_C(r_*^2)=e^{2a}\Gamma_C(r^2),\quad r_*(r;a)=\sqrt{\Gamma_C^{-1}\left(e^{2a}\Gamma_C(r^2)\right)},\quad\forall r\in(0,R),\quad\forall a\in\mathbb{R}.\label{RadMeasDiskr*}
\end{equation}
In this way every admissible choice of the parameter $C$ gives a 1-parameter group of dilations of the measure $\mu$. And for a fixed $C$, we can ask ourselves what the general form of a homogeneous integral kernel is. Since we already know the answer in coordinates $(\rho,\theta)$, namely, formula (\ref{GeneralSolutionEuclid}), all we need to do is to translate it to the original coordinates $(r,\theta)$,
\begin{equation}
K(r_x,\theta_x;r_y,\theta_y)=\frac1{\sqrt{\Gamma_C(r_x^2)\Gamma_C(r_y^2)}}F\left(\sqrt{\frac{\Gamma_C(r_x^2)}{\Gamma_C(r_y^2)}},\theta_x-\theta_y+2\pi\!\!\!\!\mod2\pi\right),\quad\forall(r_x,\theta_x),(r_y,\theta_y)\in M.\label{RadMeasDiskHomKer}
\end{equation}
Below we will apply this procedure to a few particularly interesting cases.

\subsubsection{The Poincar\'e disk $\mathbb{D}\subset\mathbb{C}$}

The Poincar\'e disk model is the unit disk $\mathbb{D}\subset\mathbb{C}$ equipped with the Riemannian metric
\begin{equation}
ds^2=\frac{dzd\bar z}{(1-|z|^2)^2}.
\end{equation}
If we switch to polar coordinates $z=re^{\imath\theta}$ then the corresponding volume form (measure) will be
\begin{equation}
d\mu(r,\theta)=\frac{rdrd\theta}{\pi(1-r^2)^2}.\label{DiskMeasure}
\end{equation}
Let us take $M=\mathbb{D}\setminus\{0\}$. In terms of formula (\ref{RadMeasDisk}) this corresponds to
$$
\gamma(t)=\frac1{(1-t)^2},\quad\Gamma_C(t)=\frac1{1-t}+C,\quad\forall t\in(0,1),\quad C\ge-1.
$$
Formulae (\ref{RadMeasDiskAction}) and (\ref{RadMeasDiskr*}) give as the action of dilations as
$$
(a,\varphi)(r,\theta)=\left(\sqrt{1-\frac1{e^{2a}(\frac1{1-r^2}+C)-C}},\theta+\varphi\mod2\pi\right),\quad\forall(r,\theta)\in M,\quad\forall(a,\varphi)\in G.
$$
In the limit case $C=-1$ this simplifies to
$$
(a,\varphi)(r,\theta)=\left(\frac{e^ar}{\sqrt{1+(e^{2a}-1)r^2}},\theta+\varphi\mod2\pi\right),\quad\forall(r,\theta)\in M,\quad\forall(a,\varphi)\in G.
$$
In complex coordinate $z=re^{\imath\theta}$ this becomes
$$
(a,\varphi)z=\frac{e^{a+\imath\theta}z}{\sqrt{1+(e^{2a}-1)|z|^2}},\quad\forall z\in M,\quad\forall(a,\varphi)\in G.
$$
By formula (\ref{RadMeasDiskHomKer}), the general form of a homogeneous integral kernel is
$$
K(r_z,\theta_z;r_w,\theta_w)=\frac1{\sqrt{\left(\frac1{1-r_z^2}+C\right)\left(\frac1{1-r_w^2}+C\right)}}F\left(\sqrt{\frac{\frac1{1-r_z^2}+C}{\frac1{1-r_w^2}+C}},\theta_z-\theta_w+2\pi\!\!\!\!\mod2\pi\right).
$$
Setting $G(\eta,e^{\imath\theta})=F(\eta,\theta\mod2\pi)$ we can write this kernel in complex variables as
$$
K(z,w)=\frac1{\sqrt{\left(\frac1{1-|z|^2}+C\right)\left(\frac1{1-|w|^2}+C\right)}}G\left(\sqrt{\frac{\frac1{1-|z|^2}+C}{\frac1{1-|w|^2}+C}},\frac{z
\overline{ w}}{|zw|}\right),\quad\forall z,w\in M.
$$
In the limit case $C=-1$ this simplifies to
$$
K(z,w)=\frac{\sqrt{(1-|z|^2)(1-|w|^2)}}{|zw|}G\left(\frac{|z|\sqrt{1-|w|^2}}{|w|\sqrt{1-|z|^2}},\frac{z\overline{ w}}{|zw|}\right),\quad\forall z,w\in M.
$$

\subsubsection{The disk $\mathbb{D}\subset\mathbb{C}$ with weighted Bergman measure}

Let us take $M=\mathbb{D}\setminus\{0\}$ with measure $\mu_\alpha$ depending on a parameter $\alpha\in(-1,+\infty)$ as follows,
$$
d\mu_\alpha(r,\theta)=\frac{(\alpha+1)(1-r^2)^\alpha rdrd\theta}{\pi}.
$$
In terms of formula (\ref{RadMeasDisk}) this corresponds to
$$
\gamma(t)=(\alpha+1)(1-t)^\alpha,\quad\Gamma_C(t)=-(1-t)^{\alpha+1}+C,\quad\forall t\in(0,1),\quad C\ge1.
$$
Formulae (\ref{RadMeasDiskAction}) and (\ref{RadMeasDiskr*}) give us the action of a dilation $(a,\varphi)\in G$ as
$$
(a,\varphi)(r,\theta)=\left(\sqrt{1-\bigl(C-e^{2a}\left[C-(1-r^2)^{\alpha+1}\right]\bigr)^{\frac1{\alpha+1}}},\theta+\varphi\mod2\pi\right),\quad\forall(r,\theta)\in M.
$$
By formula (\ref{RadMeasDiskHomKer}), the general form of a homogeneous integral kernel is
$$
K(r_z,\theta_z;r_w,\theta_w)=\frac1{\sqrt{\left(C-(1-r_z^2)^{\alpha+1}\right)\left(C-(1-r_w^2)^{\alpha+1}\right)}}
$$
$$
\times F\left(\sqrt{\frac{C-(1-r_z^2)^{\alpha+1}}{C-(1-r_w^2)^{\alpha+1}}},\theta_z-\theta_w+2\pi\!\!\!\!\mod2\pi\right),\quad\forall(r_z,\theta_z),(r_w,\theta_w)\in M,
$$
or in complex variables,
$$
K(z,w)=\frac1{\sqrt{\left(C-(1-|z|^2)^{\alpha+1}\right)\left(C-(1-|w|^2)^{\alpha+1}\right)}}
$$
$$
\times G\left(\sqrt{\frac{C-(1-|z|^2)^{\alpha+1}}{C-(1-|w|^2)^{\alpha+1}}},\frac{z\overline{ w}}{|zw|}\right),\quad\forall z,w\in M,
$$
where $G(\eta,e^{\imath\theta})=F(\eta,\theta\mod2\pi)$.

\subsubsection{Lobachevsky space $\mathbb{H}^2$}

Lobachevsky space $\mathbb{H}^2$ can be seen as the positive sheet of the 2-sheet hyperboloid of unit vectors in Minkowski space $\mathbb{R}^{1,2}$. That is,
\begin{equation}
\mathbb{H}^2=\left\{(x,y,z)\in\mathbb{R}^3\,\vline\quad z^2=1+x^2+y^2,\quad z\in[1,\infty)\right\}.
\end{equation}
We introduce hyperbolic polar coordinates $(r,\theta)$ as follows. If $(\eta,\theta)$ are polar coordinates in the plane $(x,y)$, i.e.,
\begin{equation}
\eta=\sqrt{x^2+y^2},\quad x=\eta\cos\theta,\quad y=\eta\sin\theta,
\end{equation}
then we set $z=\cosh r^2$ for $r\in[0,\infty)$. The measure $\mu$ corresponds to the normalized surface area from the induced Riemannian metric,
\begin{equation}
d\mu(z,\theta)=\frac{\sqrt{2z^2-1}dzd\theta}{2\pi},\quad d\mu(r,\theta)=\frac{\sinh r^2\sqrt{2\cosh^2r^2-1}rdrd\theta}{\pi}.
\end{equation}
In terms of formula (\ref{RadMeasDisk}) this corresponds to
$$
\gamma(t)=\sinh t\sqrt{2\cosh^2t-1},
$$
$$
\Gamma_C(t)=\frac{\cosh t\sqrt{2\cosh^2t-1}}2-\frac{\ln\left(\sqrt{2\cosh^2t-1}+\sqrt{2}\cosh t\right)}{2\sqrt2}+C,\quad\forall t\in(0,1),
$$
$$
C\ge\frac{\ln(1+\sqrt2)}{2\sqrt2}-\frac12.
$$
An explicit formula for $\Gamma_C^{-1}$, and thus for the action of dilations, does not seem to be feasible. Yet a direct substitution of $\Gamma_C$ into formula (\ref{RadMeasDiskHomKer}) will yield an explicit form for the homogeneous integral kernel, which we will not write out here.

\subsection{$\mathrm{GL}(n)$-homogeneous integral kernels on $\mathbb{R}^n$}\label{GL2HomKerExample}

As an instructive example of a Case B situation let us consider integral kernels on the Euclidean space homogeneous with respect to all invertible linear transformations. Namely, let $M=G/H=\mathbb{R}^n\setminus\{0\}$, $G=\mathrm{GL}(n)$, $H=\mathrm{Aff}(n-1)\simeq\mathrm{GL}(n-1)\rtimes\mathbb{R}^{n-1}$ and $d\mu(x)=dx$, $n>1$. In this case we have $\lambda_g=|\det g|$ for all $g\in G$. We note that $\lambda|_H\neq\id$, so that we are in Case B. The first important step is to find $\mathcal{X}^\lambda_H$, which is the set of all $x=aH\in G/H$ such that the equations
$$
aha^{-1}=h',\quad\det h'\neq1,\quad h,h'\in H
$$
have a solution. For $n>2$ we find that $\mathcal{X}^\lambda_H=G/H$ and thus $M_*=\emptyset$, which means that there exists no non-trivial homogeneous kernels. For $n=2$ it turns out that
\begin{equation}
\mathcal{X}^\lambda_H=\mathrm{N}(H)H=\left\{(x,y)\in\mathbb{R}^2\,\vline\quad x\neq 0,\quad y=0\right\},\label{XLambdaForm}
\end{equation}
whereas $H\backslash M_*$ is a single point set. This shows that there exists a unique homogeneous integral kernel $K$ up to a scalar factor. If we take $1=F_*\in\wp(\mathfrak{X}(\mathcal{T}))$ then we find
$$
K(x,y)=K(aH,bH)=\frac{F(a^{-1}bH)}{\lambda_a}=\begin{cases}
\frac1{|[x,y]|}\quad\mbox{for}\quad[x,y]\neq0,\\
0\quad\mbox{else},
\end{cases}
$$
where the cross-product $[x,y]=x_1y_2-x_2y_1$. Formally, the corresponding homogeneous integral operator $\operatorname{K}$ should act as
\begin{equation}
\operatorname{K}f(x_1,x_2)=\int\limits_{\mathbb{R}^2}\frac{f(y_1,y_2)dy_1dy_2}{|[x,y]|},\quad\label{ExtremeK}
\end{equation}
but this integral converges for a.e. $x$ only for $f=0$, and thus does not define a sensible integral operator.

On the other hand, if we take only orientation preserving linear transformations, or matrices with a positive determinant, $G=\mathrm{GL}^+(n)$ and $H=\mathrm{Aff}^+(n-1)\simeq\mathrm{GL}^+(n-1)\rtimes\mathbb{R}^{n-1}$, then for $n>2$ we still have $\mathcal{X}^\lambda_H=G/H$ and no non-trivial homogeneous kernels, whereas for $n=2$ formula (\ref{XLambdaForm}) remains valid but $H\backslash M_*$ is a 2-point set. This shows that we have a 2-dimensional space of homogeneous integral kernels $K$. To be more precise, the constant factors can be chosen independently for two possible orientations of the frame $\{x,y\}$, or equivalently, two possible signs of $[x,y]$,
$$
K(x,y)=\begin{cases}
\frac{C_+}{[x,y]}\quad\mbox{for}\quad[x,y]>0,\\
\frac{C_-}{[x,y]}\quad\mbox{for}\quad[x,y]<0,\\
0\quad\mbox{else}.
\end{cases}
$$
If we choose the factors $C_\pm$ to have opposite signs then we will have the same problem as in (\ref{ExtremeK}), i.e., non-integrability. In order that the integral converges at least conditionally, we need to choose $C_+C_->0$. In particular, there is up to a constant a unique homogeneous integral kernel $K$ antisymmetric with respect to $[x,y]$, given by
$$
K(x,y)=\begin{cases}
\frac1{[x,y]}\quad\mbox{for}\quad[x,y]\neq0,\\
0\quad\mbox{else},
\end{cases}
$$
and the corresponding integral operator $\operatorname{K}$ acts as
\begin{equation}\label{eq:UniqueIntOpn=2}
\operatorname{K}f(x_1,x_2)=\int\limits_{\mathbb{R}^2}\frac{f(y_1,y_2)dy_1dy_2}{[x,y]},
\end{equation}
for all $f$ for which the integral makes sense conditionally. Note that if we formally restrict to $x_2=1$ and $y_2=1$ (no integration over $y_2$) then we find the Hilbert transform, up to a constant factor. Thus, we can think of the operator $\operatorname{K}$ as an extension of the Hilbert transform to $\mathbb{R}^2$, which arises naturally as the unique (up to a constant factor) operator with antisymmetric integral kernel homogeneous with respect to all orientation preserving linear transformations.

The operator (\ref{eq:UniqueIntOpn=2}) after a change of variable can be written in the form
\begin{equation}\label{eq:UniqueIntOpn=2.a}
\operatorname{K}f(x_1,x_2)=-\frac{1}{x_1}\int\limits_{\mathbb{R}}\frac{d\xi}{\frac{x_2}{x_1}-\xi}\int\limits_{\mathbb{R}}f(\eta,\xi\eta)d\eta=
-\frac{1}{x_1}\int\limits_{\mathbb{R}}\left(\frac{1}{\frac{x_2}{x_1}-\xi}\frac{1}{\sqrt{1+\xi^2}}\int\limits_{\mathcal{L}_\xi}f(s)dl(s)\right)d\xi,
\end{equation}
where $\mathcal{L}_\xi=\{(t_1,t_2)\in\mathbb{R}^2: t_1\in\mathbb{R}, t_2=\xi t_1\}$ is the line in the plane depending on $\xi\in\mathbb{R}$, and $dl$ is the length element. Therefore, at least formally, the operator $\operatorname{K}$ is a composition up to a multiplier $-\frac{\pi}{x_1}$ of the Hilbert and Radon transforms
\begin{equation}\label{eq:UniqueIntOpn=2.b}
\operatorname{K}f(x_1,x_2)=-\frac{\pi}{x_1} \mathcal{H}\left[\frac{1}{\sqrt{1+\xi^2}}\left(\mathcal{R}f\right)(\mathcal{L}_\xi)\right](\frac{x_2}{x_1})\, .
\end{equation}
As we have already mentioned in the Introduction, the operator (\ref{eq:UniqueIntOpn=2}) is of interest on its own side and we plan to study its properties in detail in another paper. It is obvious that the image of this operator is always a homogeneous function of order $-1$, so it cannot be bounded, say, in Lebesgue $L^p$ spaces. The study of this operator in another type of spaces can be a natural subject for investigation.

However, there is one more general question inspired by the content of this subsection, which we will formulate as an open problem.

\textbf{Open problem:} As noted above, in the case $n>2$ there are no non-trivial kernels homogeneous with respect to all (orientation preserving) linear transformations. However, undoubtedly, the narrowing down of the group of transformations will give nontrivial kernels with certain characteristics. An interesting question therefore is: what would be a natural choice of a subgroup $G\subset\mathrm{GL}^+(n)$ for $n>2$ that produces an essentially unique homogeneous integral kernel $K$ similar to the case $n=2$ above, and what properties does the integral operator with that kernel possess?

\section{Conclusion}
The paper introduces, investigates and systematizes the concept of a homogeneous operator in a general context. Conditions for the homogeneity of the integral operator are given and questions of the weak and strong homogeneity of the kernel of the integral operator are discussed. In a sense, this work aims to provide a clear definition of the above objects and relations between these notions. Operators with homogeneous kernels, which arise in numerous applications in the multidimensional and one-dimensional cases, have been thoroughly studied. The one-dimensional case also goes back to the Hardy-Littlewood-Polya theory. Thus, this subject of research has a long history. However, as far as we know, there has never been an attempt to formalize the concept of a homogeneous operator, and that of a homogeneous integral operator as an important particular case. The general study in this paper is followed by a number of specific examples, each of which may be a candidate for independent study. We expect that this work will give inspirations for new research in the theory of operators and applications, and shed light on the nature of homogeneous operators and, in particular, homogeneous integral operators.

\section{Acknowledgements}
The work was done at the Regional Scientific and Educational Mathematical Center of Southern Federal University with the support of the Ministry of Education and Science of Russia, agreement No. 075-02-2021-1386.
\section{Data availability statement} The authors confirm that data all generated or analysed during this study are included in this article.

\end{document}